\newtheorem{theorem}{Theorem}
\newtheorem{definition}[theorem]{Definition}
\newtheorem{lemma}[theorem]{Lemma}
\newtheorem{proposition}[theorem]{Proposition}
\newtheorem{corollary}[theorem]{Corollary}
\def\myarrow{\ \hbox to 2em{\leaders
\hbox to 0.5ex{\hss\raise 0.55ex\hbox to 0.3ex{\hrulefill}\hss}
\hfill\,\llap{$>$}}\ }
\title{  Co-abelian toroidal compactifications  of  torsion free ball quotients  }
\author{   Azniv Kirkor Kasparian
\footnote{Mathematics Subject Classification: Primary - 14M27, Secondary -  14K02.  \protect \\
Key words and phrases:  complex $2$-ball, torsion free ball quotient, co-abelian toroidal compactification,
 volume of  a ball quotient by a lattice, Picard modular toroidal compactification over ${\mathbb Q}( \sqrt{-3})$. \protect  \\
Research partially supported by    Contract 178 / 09. 05. 2012 with Kliment Ohridski University of Sofia.  } }
\date{      }
\begin{document}
\maketitle

\thispagestyle{empty}

\begin{abstract}
Let $X'= ( {\mathbb B} / \Gamma )'$ be the toroidal compactification of the quotient ${\mathbb B} / \Gamma$ of the complex $2$-ball ${\mathbb B}$ by a torsion free lattice $\Gamma < SU_{2,1}$.
We say that ${\mathbb B} / \Gamma$ and $X'$ are co-abelian if there is an abelian surface, birational to $X'$.
The present work can be viewed as an illustration for the presence of a plenty of non-compact  co-abelian torsion free ball quotients ${\mathbb B} / \Gamma$.
More precisely, it shows that all the admissible  values $\frac{8 \pi ^2}{3} n \in  \frac{8 \pi ^2}{3} {\mathbb N}$ for  the volume of a quotient ${\mathbb B} / \Gamma$ by a torsion free lattice $\Gamma < SU_{2,1}$ are attained by co-abelian Picard modular  ${\mathbb B} / \Gamma_n$ over ${\mathbb Q}( \sqrt{-3})$,
 ${\rm vol} ( {\mathbb B} / \Gamma _n) = \frac{8 \pi ^2}{3} n$.
The article provides three types of infinite series $\mu '_n : ( {\mathbb B}  / \Gamma _n)' \rightarrow ({\mathbb B} / \Gamma _{n-1})'$ of finite unramified coverings $\mu'_n$ with abelian Galois groups, relating co-abelian torsion free Picard modular  toroidal compactifications $({\mathbb B} / \Gamma _n)'$ over ${\mathbb Q}( \sqrt{-3})$, with infinitely increasing volumes.
The first type is supported by mutually birational members $({\mathbb B} / \Gamma _n)'$ with fixed number of cusps.
The second kind is with mutually birational $({\mathbb B} / \Gamma _n)'$ and infinitely increasing number of cusps.
The third kind of series $\{ \mu '_n \} _{n=1} ^{\infty}$ relates mutually non-birational $({\mathbb B} / \Gamma _n)'$ with infinitely increasing number of cusps.
\end{abstract}

\section{Introduction}

Let
 $
{\mathbb B} = \{ ( z_1, z_2) \in {\mathbb C}^2 \ \ \vert \ \  | z_1| ^2 + | z_2|^2 < 1 \}
$
be the complex $2$-ball and $\Gamma$ be a discrete subgroup of $SU_{2,1}$.
The quotient $SU_{2,1} / \Gamma$ has finite invariant volume exactly when ${\mathbb B} / \Gamma$ has finite invariant volume.
If so, we say that $\Gamma$ is a lattice of $SU_{2,1}$.
The torsion elements of a lattice $\Gamma < SU_{2,1}$ are the ones of finite order, which are different from the identity.
The lattices $\Gamma$  without  torsion elements are said to be torsion free.

In \cite{AMRT}, Ash-Mumford-Rapoport-Tsai construct smooth toroidal compactifications of torsion free arithmetic quotients of bounded symmetric domains.
 Mok's \cite{Mok}, Hummel's \cite{Hummel}   and  Hummel-Schroeder's \cite{HummelSchroeder}  extend the construction to arbitrary  (not necessarily arithmetic) torsion free  lattices. .

\begin{definition}    \label{CoAbelianTorsionFreeTaroidalCompactification}
The torsion free ball quotient ${\mathbb B} / \Gamma$ and its  toroidal compactification $X'= \left( {\mathbb B} / \Gamma \right)'$ are co-abelian,
 if the minimal model    $X$  of $X'$ is an abelian surface.
\end{definition}

In \cite{WeakHolzConj} was shown that any ball quotient ${\mathbb B} / \Gamma$, birational to an abelian surface is smooth and non-compact.

The study of the co-abelian torsion free toroidal compactifications is initiated by Holzapfel in \cite{Ho04}.
Let $\xi : X'= \left( {\mathbb B} / \Gamma \right)'  \rightarrow  X$ be the blow-down of the smooth rational $(-1)$-curves on a torsion free toroidal compactification $X'= \left( {\mathbb B} / \Gamma \right)'$ to its   minimal model $X$ and $D = \xi (T) \subset X$ be the image of the toroidal compactifying divisor $T = \left( {\mathbb B} / \Gamma \right)'  \setminus  \left( {\mathbb B} /   \Gamma \right)$.
By \cite{AMRT}, $T = \sum\limits _{i=1} ^h T_i$ is a union of disjoint, contractible, smooth irreducible elliptic curves $T_i$.

\begin{definition}    \label{EllipticConfiguration}
A divisor $D = \sum\limits _{i=1} ^h D_i$ on a surface $X$ is called an elliptic configuration, if all the irreducible components $D_i$ of $D$ are smooth elliptic curves.
\end{definition}

The toroidal compactifying divisor $T = \sum\limits _{i=1} ^h T_i$ and its image $D = \xi (T) =  \sum\limits _{i=1} ^h \xi (T_i)$ are elliptic configurations on the smooth surfaces $X'$, respectively, $X$.
 The singular locus $D^{\rm sing} = \sum\limits _{1 \leq i < j \leq h} D_i \cap D_j$  of an elliptic configuration $D = \sum\limits _{i=1} ^h D_i$ consists of the intersection points of the different components.

\begin{definition}     \label{IntersectingEllipticConfiguration}
An elliptic configuration $D = \sum\limits _{i=1} ^n D_i$  is intersecting if it has at least one singular point.
\end{definition}

The toroidal compactifying divisors $T$ consist of disjoint smooth elliptic curves and are always non-intersecting.

An arbitrary torsion free toroidal compactification $X'= \left( {\mathbb B} / \Gamma \right)'$ with minimal model $X$ can be obtained from $X$ by blowing up   the singular points $D^{\rm sing}$ of $D$.
Thus, the pairs $ \left( X'= \left( {\mathbb B} / \Gamma \right)', T \right)$  and $(X,D)$  are in a bijective correspondence.

In \cite{BSA} Holzapfel has established that any torsion free toroidal compactification $X'= \left( {\mathbb B} / \Gamma \right)'$ with minimal model $X$ is subject to the proportionality condition
\begin{equation}    \label{ProportionalityCondition}
3 c_2 (X) - c_1 ^2 (X) = \sum\limits _{i=1} ^h K_X.D_i + \sum\limits _{i=1} ^h | D_i \cap D^{\rm sing} | - 4 | D^{\rm sing}|
\end{equation}
on the Chern numbers $c_2 (X)$, $c_1 ^2(X)$ of $X$ and the elliptic configuration $D = \xi (T) \subset X$.
An example of Momot from \cite{Momot2} illustrates that for $\kappa (X') = \kappa (X) =1$ the elliptic configuration $D$ is not supposed to be intersecting.
In other words, a torsion free toroidal compactification $X'= \left( {\mathbb B} / \Gamma \right)'$  of Kodaira dimension $\kappa (X') =1$ can be a minimal surface.

In the case of an abelian surface $X$, the canonical bundle $\mathcal{K} _X = \mathcal{O}_X$ is trivial, so that $K_X. D_i=0$ for any curve $D_i$ on $X$.
By the adjunction formula, $D_i^2=0$ and $D_i$ are not contractible.
Therefore any co-abelian torsion free toroidal compactification $X'= ( {\mathbb B} / \Gamma )'$ contain a smooth rational $(-1)$-curve and the elliptic configuration $D$ is always intersecting.
 Moreover, $c_2 (X) = c_1 (X) =0$ vanish and
 the proportionality of the co-abelian $X'= \left( {\mathbb B} /  \Gamma \right)'$ reads as
\begin{equation}   \label{AbelianProportionality}
\sum\limits _{i=1} ^h | D_i \cap D^{\rm sing}| = 4 |D^{\rm sing}|.
\end{equation}

 Recall that an isogeny of abelian varieties is an epimorphism with finite kernel.

\begin{theorem}     \label{HolzapfelsResults_On_CoAbelianTFTC}
{\rm (Holzapfel)}
(i) {\rm (Theorem 2.5 \cite {Ho04})} The blow-up $A'$ of an abelian surface $A$ at the singular points $D^{\rm sing}$ of an intersecting elliptic configuration $D = \sum\limits _{i=1} ^h D_i \subset A$ is a torsion free toroidal compactification $A'= \left( {\mathbb B} / \Gamma \right)'$ of a ball quotient ${\mathbb B} / \Gamma$ if and only if $D$ satisfies the proportionality condition (\ref{AbelianProportionality}).

(ii) {\rm (Corollary 2.8   \cite{Ho04}) }
The abelian minimal model $A$ of a torsion free toroidal compactification $\left( {\mathbb B} / \Gamma \right)'$ is isogeneous to the Cartesian square
$E \times E$ of an elliptic curve $E$.

 (iii) {\rm (Proposition 2.6   \cite{Ho04})} The isogenies of abelian surfaces pull back proportional elliptic configurations to proportional elliptic configurations.
Equivalently, if $\xi : X'= \left( {\mathbb B} / \Gamma \right)'\rightarrow X$ is the blow-down of the smooth rational $(-1)$-curves to the abelian minimal model $X$ of $X'$ and $\mu : Y \rightarrow X$ is an isogeny of abelian surfaces, then   the fibered product $Y' := Y \times _X X'$, defined by  the  commutative diagram
\[
\begin{diagram}
\node{Y}  \arrow{s,r}{\mu}   \node{Y' := Y \times _X X'} \arrow{w,t}{{\rm pr}_1}   \arrow{s,r}{{\rm pr}_2}  \\
\node{X}   \node{X'}   \arrow{w,t}{\xi}
\end{diagram}
\]
 is a torsion free toroidal compactification  $Y'= \left( {\mathbb B} / \Gamma _o \right)'$.
  The first canonical projection ${\rm pr} _1 : Y'\rightarrow Y$ is the blow-down of the smooth rational $(-1)$-curves on $Y'$ and the second canonical projection ${\rm pr} _2 : Y'\rightarrow X'$ is unramified $\ker ( \mu )$-Galois covering.
In particular, ${\rm pr}_2$ is a finite unramified abelian Galois covering, i.e., a finite unramified Galois covering
 with abelian Galois group $( \ker ( \mu), +)$.
 The lattice $\Gamma _o$ is a subgroup of $\Gamma$ of finite index.

From now on, we refer briefly to $Y':= Y \times _X X'$ as to an isogeny pull back of $X'= \left( {\mathbb B} / \Gamma \right)'$.
\end{theorem}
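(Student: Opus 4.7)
The plan is to treat the three statements in order, since (ii) and (iii) build on (i). For part (i), the ``only if'' direction is a direct specialization of the general proportionality (\ref{ProportionalityCondition}) to the abelian case, where $c_1^2(X)=c_2(X)=0$ and $K_X.D_i=0$, so (\ref{ProportionalityCondition}) reduces to (\ref{AbelianProportionality}). For the ``if'' direction, I would invoke a converse to Hirzebruch-Mumford proportionality: knowing that the candidate surface $A'$ carries a divisor $T$ with the right logarithmic Chern-number equality $\overline{c}_1^2 = 3\overline{c}_2$ implies that $A' \setminus T$ is uniformized by the ball (this is the logarithmic Miyaoka-Yau/Kobayashi--Sakai equality, applied in the spirit of Mumford's original construction). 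Concretely, I would compute the log Chern numbers of the pair $(A', T)$ where $T$ is the sum of the strict transforms of the $D_i$ together with the exceptional divisors over $D^{\rm sing}$, verify that proportionality (\ref{AbelianProportionality}) on $(A,D)$ translates into $\overline{c}_1^2(A',T) = 3 \overline{c}_2(A',T)$ via the standard blow-up formulas, and then quote the uniformization theorem. Torsion freeness of the resulting lattice $\Gamma$ follows because $A' \setminus T$ is smooth and admits no quasi-unipotent automorphisms fixing points.

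For part (ii), an intersecting elliptic configuration on $A$ provides two distinct elliptic subvarieties $\widetilde{D}_i, \widetilde{D}_j \subset A$ (translates of the components) meeting transversely. By Poincar\'e complete reducibility $A$ is isogenous to $\widetilde{D}_i \times \widetilde{D}_j$. To upgrade this to $A \sim E\times E$, I would use the proportionality (\ref{AbelianProportionality}) together with the adjunction $D_k^2 = 0$ and the intersection pairing on $\mathrm{NS}(A)$: the constraints on the $D_i.D_j$ forced by (\ref{AbelianProportionality}) yield non-trivial homomorphisms between $\widetilde{D}_i$ and $\widetilde{D}_j$, so the two factors are isogenous and $A \sim E \times E$.

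For part (iii), the fibered product $Y' = Y \times_X X'$ sits in the stated diagram. Because $\mu$ is an \'etale isogeny and $\xi$ is the blow-up of the finite set $D^{\rm sing}$, base change gives that $\mathrm{pr}_2$ is \'etale (hence $Y'$ is smooth) and $\mathrm{pr}_1$ is the blow-up of $Y$ at $\mu^{-1}(D^{\rm sing})$. The preimage $\mu^{-1}(D) = \sum_i \mu^{-1}(D_i)$ is again an elliptic configuration on $Y$, since each $\mu^{-1}(D_i)$ is a disjoint union of \'etale covers of the elliptic curve $D_i$. Proportionality is preserved because every term in (\ref{AbelianProportionality}) scales by $\deg(\mu) = |\ker(\mu)|$ under the \'etale cover, so (\ref{AbelianProportionality}) holds for $\mu^{-1}(D)$ on $Y$. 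Part (i) then yields $Y' = ({\mathbb B}/\Gamma_o)'$, and the deck group of $\mathrm{pr}_2$ is $\ker(\mu)$ acting by translation, which is free and abelian; passing to fundamental groups identifies $\Gamma_o$ as a finite-index subgroup of $\Gamma$.

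The main obstacle is the converse in part (i): it rests on the logarithmic Miyaoka-Yau equality characterization of ball quotients, a deep theorem whose applicability in the non-compact abelian setting must be checked carefully, in particular verifying that the boundary $T$ has the required semi-stable (log-canonical) structure and that the correct normalization of log Chern numbers is used after the blow-up at $D^{\rm sing}$.
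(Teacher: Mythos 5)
The paper contains no proof of this theorem: it is stated as a quotation of Holzapfel's results, with each part cited precisely to \cite{Ho04} (Theorem 2.5, Proposition 2.6, Corollary 2.8). So your attempt can only be measured against Holzapfel's own strategy, which your sketch broadly follows (logarithmic Miyaoka--Yau uniformization for (i), subtorus structure of abelian surfaces for (ii), \'etale base change for (iii)). Your part (iii) is essentially correct and complete in outline: isogenies are \'etale, blow-ups commute with \'etale base change, both sides of (\ref{AbelianProportionality}) scale by $\deg(\mu)$ because $\left( \mu^{-1}(D) \right)^{\rm sing} = \mu^{-1}(D^{\rm sing})$ and exactly one component of $\mu^{-1}(D_i)$ passes through each point lying over a singular point, and the deck-group and fundamental-group identifications are as you state.

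Two steps, however, contain genuine errors. First, in part (i) you take the boundary divisor on $A'$ to be ``the strict transforms of the $D_i$ together with the exceptional divisors over $D^{\rm sing}$''. That is wrong: the toroidal boundary $T$ consists of the strict transforms only, and the exceptional $(-1)$-curves lie \emph{inside} the ball quotient ${\mathbb B}/\Gamma = A' \setminus T$ --- this is exactly why co-abelian toroidal compactifications contain complete rational curves, as the paper emphasizes. With your choice of boundary the log Chern equality you intend to verify is false. In Hirzebruch's example ($h=4$, a single singular point, $K_{A'} = E$ the exceptional curve, strict transforms $T_i$ with $T_i^2 = -1$, $E.T_i = 1$, $T_i.T_j = 0$), the correct boundary gives $\left( K_{A'} + \sum_i T_i \right)^2 = -1 + 8 - 4 = 3 = 3\left( e(A') - e(T) \right)$, whereas adjoining $E$ to the boundary gives $\left( K_{A'} + \sum_i T_i + E \right)^2 = 8$ while $3 \left( e(A') - e(T \cup E) \right) = 9$. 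Second, in part (ii) the phrase ``the constraints on the $D_i . D_j$ forced by (\ref{AbelianProportionality}) yield non-trivial homomorphisms'' skips the actual mechanism. Since every singular point lies on at least two components and (\ref{AbelianProportionality}) says the \emph{average} number of components through a singular point equals $4$, some singular point lies on at least four, hence on at least three pairwise non-parallel, components $D_1, D_2, D_3$. Translating that point to the origin, the addition map $\widetilde{D}_1 \times \widetilde{D}_2 \rightarrow A$ is an isogeny, and the composite $\widetilde{D}_3 \hookrightarrow A \rightarrow A / \widetilde{D}_1$ is a homomorphism of elliptic curves with finite kernel, i.e., an isogeny, so $\widetilde{D}_3 \sim \widetilde{D}_2$ and likewise $\widetilde{D}_3 \sim \widetilde{D}_1$; hence $A \sim E \times E$. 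Two transverse components alone --- which is all your Poincar\'e-reducibility step uses --- give only $A \sim \widetilde{D}_i \times \widetilde{D}_j$ with possibly non-isogenous factors. Finally, your torsion-freeness argument (``no quasi-unipotent automorphisms fixing points'') is not the right one: uniformization identifies $\Gamma \cong \pi_1 ( A' \setminus T)$ acting freely on ${\mathbb B}$ as a deck group, and any torsion element of a discrete subgroup of $SU_{2,1}$ fixes a point of ${\mathbb B}$ by Cartan's fixed-point theorem, so a freely acting lattice is torsion free.
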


In order to recall the notion of a Picard modular torsion free toroidal compactification $X'= ( {\mathbb B} / \Gamma)'$,
let us  consider an imaginary quadratic number field ${\mathbb Q} ( \sqrt{-d})$ with integers ring $\mathcal{O}_{-d}$
and denote by $M_{3 \times 3} ( \mathcal{O}_{-d})$ the set of  $3 \times 3$-matrices with entries from $\mathcal{O}_{-d}$.
If $\mathcal{O}_{-d}^*$ is the units group of $\mathcal{O}_{-d}$, then the group
\[
SU_{2,1} ( \mathcal{O}_{-d}) := \{ g \in SU_{2,1} \cap M_{3 \times 3} ( \mathcal{O}_{-d}) \ \ \vert \ \  g^{-1} \in M_{3 \times 3} ( \mathcal{O}_{-d}) \} =
\]
\[
= \{ g \in SU_{2,1} \cap   M_{3 \times 3} ( \mathcal{O}_{-d}) \ \ \vert \ \  \det (g) \in \mathcal{O}_{-d}^* \}.
\]
The lattices $\Gamma _1$ and $\Gamma _2$ of $SU_{2,1}$ are commensurable, if their intersection $\Gamma _1 \cap \Gamma _2$ is of finite index in $\Gamma _1$ and $\Gamma _2$.

\begin{definition}   \label{PicardModularTorroidalCompactification}
The lattice $\Gamma$ toroidal compactification $X'= ( {\mathbb B} / \Gamma)'$ are called Picard modular over the imaginary quadratic number field
 ${\mathbb Q}( \sqrt{-d})$, if the lattice $\Gamma$ is commensurable with $SU_{2,1} ( \mathcal{O}_{-d})$ for the integers ring $\mathcal{O}_{-d}$ of
  ${\mathbb Q}( \sqrt{-d})$..
\end{definition}

Any non-uniform arithmetic lattice $\Gamma < SU_{2,1}$ is Picard modular over some  imaginary quadratic number field ${\mathbb Q}( \sqrt{-d})$ (cf. \cite{EmeryStover}).

Here is a brief synopsis of the paper.
The next section establishes that for any torsion free lattice $\Gamma < SU_{2,1}$ there is a torsion free lattice $\Gamma _o$ with co-abelian quotient ${\mathbb B} / \Gamma _o$, such that ${\rm vol} ( {\mathbb B} / \Gamma ) = {\rm vol} ( {\mathbb B} / \Gamma _o)$ with respect to the Haar measure of $SU_{2,1}$.
There follow two technical sections. More precisely, the third one studies the elliptic curves on a product $E_1 \times E_2$ of elliptic curves $E_1$, $E_2$.
The fourth section describes the pull-backs of elliptic curves under diagonal isogenies $\mu = \left(  \begin{array}{cc}
\alpha  &  0  \\
0  &  \beta
\end{array}  \right) : E_2 \times E_2  \rightarrow E_1 \times E_1$ of Cartesian squares $E_i \times E_i$ of elliptic curves $E_i$.
The final, fifth section  provides explicit isogeny series  of co-abelian torsion free Picard modular toroidal compactifications over ${\mathbb Q}( \sqrt{-3})$, with infinitely increasing volumes.
There are ones with mutually birational terms and fixed number of cusps, as well as ones with mutually birational terms and infinitely increasing number of cusps. After showing that $E_2 \times E_2$ and $E_1 \times E_1$ with $End(E_1) \neq End(E_2)$  are not birational,  the article constructs an infinite isogeny series of mutually non-birational co-abelian torsion free toroidal compactifications with infinitely increasing volumes and infinitely increasing number of cusps.

\section{The  volumes of torsion free ball quotients are attained by  co-abelian  torsion free Picard modular  ones over ${\mathbb Q}(\sqrt{-3})$}

Let us recall some results on the volumes of the local complex hyperbolic surfaces ${\mathbb B} / \Gamma$.

\begin{theorem}    \label{HirzebruchVolume}
{\rm (Hirzebruch \cite{Hir1},  \cite{Hir3})} Let
\[
{\mathbb B}^n = \{ (z_1, \ldots , z_n) \in {\mathbb C}^n \ \ \vert \ \  |z_1|^2 + \ldots + |z_n|^2 = 1 \}
\]
be the complex $n$-dimensional ball, $\Gamma$ be a lattice of $SU_{n,1}$ and $e( {\mathbb B}^n / \Gamma)$ be the Euler number of ${\mathbb B}^n / \Gamma$.
Then the volume of ${\mathbb B}^n / \Gamma$ with respect to the Haar measure is
\[
{\rm vol} ( {\mathbb B} ^n / \Gamma  ) = \frac{(- \pi) ^n 2^{2n}}{(n+1)!} e \left( {\mathbb B}^n / \Gamma \right).
\]
\end{theorem}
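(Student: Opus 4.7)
The plan is to establish Hirzebruch's volume formula via the proportionality principle for homogeneous Hermitian symmetric domains. First I would equip ${\mathbb B}^n$ with the Bergman metric, whose Kähler form $\omega$ is $SU_{n,1}$-invariant. Because $SU_{n,1}$ acts transitively on ${\mathbb B}^n$ with isotropy $S(U_n \times U_1)$, and the representation of this isotropy group on the space of $(p,p)$-forms at a point contains the line spanned by $\omega^p$ as a distinguished summand, each Chern form $c_p(T{\mathbb B}^n)$ computed from the Bergman metric is an $SU_{n,1}$-invariant $(p,p)$-form, hence must equal a constant multiple of $\omega^p$. This pins down, in particular, a fixed real constant $\lambda_n$ with $c_n(T{\mathbb B}^n) = \lambda_n \, \omega^n/n!$ pointwise.

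Next I would pass to the quotient. For a torsion free cocompact $\Gamma$, the Bergman metric descends to a smooth Kähler-Einstein metric on ${\mathbb B}^n/\Gamma$, and Chern-Weil together with the Gauss-Bonnet theorem yield
\[
e({\mathbb B}^n/\Gamma) = \int_{{\mathbb B}^n/\Gamma} c_n = \lambda_n \int_{{\mathbb B}^n/\Gamma} \frac{\omega^n}{n!} = \lambda_n \, {\rm vol}({\mathbb B}^n/\Gamma).
\]
Thus the proof reduces to identifying $\lambda_n$.

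To evaluate $\lambda_n$, I would exploit the Borel-Hirzebruch duality between ${\mathbb B}^n$ and its compact dual ${\mathbb P}^n$, on which the Fubini-Study metric is $SU(n+1)$-invariant and produces the same proportionality pattern with reversed sign in the curvature. Normalizing the metrics so that they agree at the base point up to the sign of the curvature, the same pointwise constant (up to the sign $(-1)^n$ produced by replacing holomorphic sectional curvature $-1$ by $+1$) governs the relation between $c_n$ and $\omega^n/n!$ on ${\mathbb P}^n$. Since $e({\mathbb P}^n) = n+1$ and the Fubini-Study volume of ${\mathbb P}^n$ is $\pi^n/(n! \cdot 2^{-2n})$ up to the fixed normalization, solving for the common proportionality constant gives $\lambda_n^{-1} = (-\pi)^n 2^{2n}/(n+1)!$, which is exactly the claimed coefficient.

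The main obstacle is extending the argument from the smooth compact case to an arbitrary torsion free lattice $\Gamma$ of finite covolume, i.e.\ to the non-compact finite volume case relevant to the paper. Here the Bergman volume form is integrable over ${\mathbb B}^n/\Gamma$ but the Chern-Weil argument must be carried out on the toroidal compactification $({\mathbb B}^n/\Gamma)'$ with respect to the Mumford logarithmic extension of $T{\mathbb B}^n/\Gamma$ along the toroidal divisor $T$. One has to verify that the logarithmic top Chern number $c_n\bigl(\Omega^1_{X'}(\log T)^\vee\bigr)[X']$ coincides with the topological Euler number $e({\mathbb B}^n/\Gamma)$, and that the boundary contribution to the Chern-Weil integral of the singular Kähler form near the cusp cross-sections $T_i$ vanishes. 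Granting this analytic input from \cite{AMRT}, the proportionality identity on ${\mathbb B}^n/\Gamma$ persists term by term and the final formula follows. For $n=2$ this specializes to ${\rm vol}({\mathbb B}^2/\Gamma) = \frac{8\pi^2}{3} e({\mathbb B}^2/\Gamma)$, which underlies the admissible values $\frac{8\pi^2}{3}n$ discussed in the abstract.
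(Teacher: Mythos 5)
The paper does not prove this theorem at all: it is quoted as a classical result, with \cite{Hir1} and \cite{Hir3} cited as the source, and the only thing the paper actually uses is the specialization ${\rm vol}({\mathbb B}/\Gamma)=\frac{8\pi^2}{3}e({\mathbb B}/\Gamma)$ for $n=2$. So your proposal can only be compared with the classical argument, and what you wrote is indeed that argument, namely Hirzebruch's proportionality principle. Your compact, torsion-free case is sound: for the top Chern form one only needs that invariant $(n,n)$-forms on a homogeneous space form a one-dimensional space, so $c_n(T{\mathbb B}^n)=\lambda_n\,\omega^n/n!$ pointwise; Chern--Weil and Gauss--Bonnet give $e=\lambda_n\,{\rm vol}$; and evaluating $\lambda_n$ on the compact dual ${\mathbb P}^n$, with $e({\mathbb P}^n)=n+1$, volume $2^{2n}\pi^n/n!$ in the curvature-$\pm1$ normalization, and the sign $(-1)^n$ from reversing the curvature, yields exactly $\frac{(-\pi)^n2^{2n}}{(n+1)!}$, consistent with $\frac{8\pi^2}{3}$ at $n=2$.

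Two points keep this from being a complete proof of the statement as phrased. First, the theorem allows non-uniform lattices, and these are the only ones relevant to the paper, since every co-abelian quotient is non-compact; you correctly isolate this as the main obstacle but then assume it (``granting this analytic input''). The input you need is not really contained in \cite{AMRT}: it is Mumford's proportionality theorem in the non-compact case (Invent. Math. 42 (1977)), which establishes that the Chern numbers of the log-extension $\Omega^1_{X'}(\log T)^\vee$ over the toroidal compactification obey the same proportionality as in the compact case, together with the identification $c_n\bigl(\Omega^1_{X'}(\log T)^\vee\bigr)[X']=e({\mathbb B}^n/\Gamma)$, which uses $e(T_i)=0$ for the boundary components. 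Deferring this is defensible for a theorem the paper itself only cites, but it is where all the work lies. Second, the statement admits lattices with torsion, for which the formula holds only if $e$ is read as the orbifold Euler number --- it is genuinely rational there, as shown by the minimal-volume value $\frac{\pi^2}{27}=\frac{8\pi^2}{3}\cdot\frac{1}{72}$ of Stover quoted later in the paper --- so strictly you would need Selberg's lemma and multiplicativity of both volume and orbifold Euler characteristic under finite covers to reduce to the torsion-free case. Neither caveat is a wrong step; they mark precisely what your sketch leaves unproved.
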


From now on, we denote by ${\mathbb B} := {\mathbb B}^2$ the complex $2$-dimensional ball and note that for a torsion free lattice $\Gamma$ of $SU_{2,1}$, the Euler number $e( {\mathbb B} / \Gamma) \in {\mathbb Z}$ is integral. Combining with ${\rm vol} ( {\mathbb B} / \Gamma ) = \frac{8 \pi ^2}{3} e ( {\mathbb B} / \Gamma ) \in {\mathbb R}^{>0}$, one concludes that $e( {\mathbb B} / \Gamma ) \in {\mathbb N}$ and
\begin{equation}    \label{Dim2HirVolFormula}
{\rm vol} ( {\mathbb B} / \Gamma ) = \frac{8 \pi ^2}{3} e ( {\mathbb B} / \Gamma ) \in  \frac{8 \pi^2}{3} {\mathbb N}.
\end{equation}

\begin{theorem}   \label{HersonskyPaulin}
{\rm (Hersonsky and Paulin \cite{HP})}
The smallest volume of a compact quotient ${\mathbb B} / \Gamma$ by a torsion free lattice $\Gamma < SU_{2,1}$ is $8 \pi ^2$.
\end{theorem}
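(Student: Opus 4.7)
The plan is to combine the two-dimensional Hirzebruch volume formula (\ref{Dim2HirVolFormula}) with the Bogomolov--Miyaoka--Yau (BMY) equality for compact ball quotients and Noether's formula, producing the lower bound $e({\mathbb B}/\Gamma) \geq 3$, and then to exhibit a fake projective plane as a compact torsion free ball quotient realizing $c_2 = 3$, hence volume exactly $8\pi^2$.

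First I would observe that for a torsion free lattice $\Gamma < SU_{2,1}$ with ${\mathbb B}/\Gamma$ compact, the quotient $X := {\mathbb B}/\Gamma$ is a smooth compact complex surface whose universal cover is ${\mathbb B}$. The $SU_{2,1}$-invariant Bergman metric on ${\mathbb B}$ descends to a K\"ahler--Einstein metric of negative Ricci curvature on $X$, so $K_X$ is ample and $X$ is a minimal surface of general type. Since the universal cover is the ball, the BMY inequality holds as an equality, $c_1^2(X) = 3 c_2(X)$.

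Substituting into Noether's formula gives $12 \chi({\mathcal O}_X) = c_1^2(X) + c_2(X) = 4 c_2(X)$, whence $c_2(X) = 3 \chi({\mathcal O}_X)$. For any minimal surface of general type one has $\chi({\mathcal O}_X) \geq 1$, so $e({\mathbb B}/\Gamma) = c_2(X) \geq 3$, and the Hirzebruch volume formula (\ref{Dim2HirVolFormula}) yields
\[
{\rm vol}({\mathbb B}/\Gamma) \;=\; \frac{8\pi^2}{3}\, e({\mathbb B}/\Gamma) \;\geq\; 8\pi^2.
\]

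To establish sharpness I would invoke the existence of a \emph{fake projective plane} --- a compact smooth complex surface with the same Betti numbers as ${\mathbb C}{\mathbb P}^2$ but not biholomorphic to it, first constructed by Mumford. By Yau's uniformization, any such surface arises as a torsion free compact ball quotient ${\mathbb B}/\Gamma$ with $\chi({\mathcal O}) = 1$ and hence $c_2 = 3$, so its volume is exactly $\frac{8\pi^2}{3} \cdot 3 = 8\pi^2$. The hard part of the argument is precisely this existence, which rests on the deep arithmetic constructions of Mumford and the complete classification of fake projective planes by Prasad and Yeung; the lower-bound half of the theorem is essentially formal, once BMY-equality and Noether's formula are in hand.
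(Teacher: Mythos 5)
The paper contains no proof of this statement: Theorem \ref{HersonskyPaulin} is imported from Hersonsky--Paulin \cite{HP} as a black box, so the only comparison available is with the literature, not with an internal argument. Your proof is correct in its main lines and is essentially the standard (and, in substance, the original) argument: for a compact torsion free quotient $X={\mathbb B}/\Gamma$ the canonical class is ample, so $X$ is a minimal surface of general type; proportionality gives $c_1^2(X)=3c_2(X)$; Noether's formula then yields $c_2(X)=3\chi({\mathcal O}_X)\geq 3$; and (\ref{Dim2HirVolFormula}) converts this into ${\rm vol}({\mathbb B}/\Gamma)\geq 8\pi^2$, with sharpness witnessed by a fake projective plane via Yau's uniformization.

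The one step that is thinner than it should be is the sharpness claim, because of the difference between $SU_{2,1}$ and $PU_{2,1}$. The fundamental group of a fake projective plane is naturally a cocompact torsion free lattice in $PU_{2,1}={\rm Aut}({\mathbb B})$, whereas the theorem (and the entire paper) is phrased for lattices in $SU_{2,1}$, a central ${\mathbb Z}/3{\mathbb Z}$-extension of $PU_{2,1}$. The full preimage of $\pi_1(X)$ in $SU_{2,1}$ contains the center and hence has torsion; what you actually need is that $\pi_1(X)$ lifts to $SU_{2,1}$, i.e., that the central extension splits over $\pi_1(X)$. This is known to hold for fake projective planes (it follows from work of Koll\'ar and is used in the Prasad--Yeung classification), but it is not automatic and must be invoked explicitly; without it your construction only realizes the minimal volume by a torsion free lattice in $PU_{2,1}$, not in $SU_{2,1}$ as the statement requires. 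Apart from this citation gap in the attainment step, the lower-bound half of your argument is complete and rigorous.
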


\begin{theorem}   \label{Parker}
{\rm (Parker \cite{P})}
The smallest volume of a non-compact   quotient ${\mathbb B} / \Gamma$ by a  torsion free  lattice $\Gamma < SU_{2,1}$ is $\frac{8 \pi ^2}{3}$.
\end{theorem}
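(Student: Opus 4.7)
The plan is to split the assertion into two independent parts: a lower bound $\mathrm{vol}({\mathbb B}/\Gamma) \geq \tfrac{8\pi^{2}}{3}$ valid for every non-compact torsion free ball quotient, and the exhibition of an explicit lattice $\Gamma$ saturating this bound. The first half is essentially gifted by the machinery already assembled before the statement; the real work is in the second half.

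For the lower bound, I would simply invoke Theorem \ref{HirzebruchVolume} in the form (\ref{Dim2HirVolFormula}): for any torsion free lattice $\Gamma < SU_{2,1}$ we have $\mathrm{vol}({\mathbb B}/\Gamma) = \tfrac{8\pi^{2}}{3}\,e({\mathbb B}/\Gamma)$ with $e({\mathbb B}/\Gamma) \in {\mathbb N}$. Here one needs to justify that for a non-compact finite-volume torsion free ${\mathbb B}/\Gamma$ the topological Euler number is still an integer, which follows by identifying $e({\mathbb B}/\Gamma)$ with the Euler number of the smooth compact toroidal compactification $({\mathbb B}/\Gamma)'$ (the toroidal divisor $T$ is a disjoint union of elliptic curves, hence contributes $e(T)=0$ so that $e(({\mathbb B}/\Gamma)') = e({\mathbb B}/\Gamma)$). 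Positivity of volume forces $e({\mathbb B}/\Gamma) \geq 1$, and the lower bound follows.

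For sharpness I would exhibit a non-compact torsion free $\Gamma < SU_{2,1}$ with $e({\mathbb B}/\Gamma) = 1$. The natural candidate is a neat principal congruence subgroup of the Picard modular group $SU_{2,1}(\mathcal{O}_{-3})$, since $\mathcal{O}_{-3} = {\mathbb Z}[\omega]$ with $\omega = e^{2\pi i/3}$ has a rich supply of non-trivial ideals $\mathfrak{a}$ for which $\Gamma(\mathfrak{a}) = \ker\bigl(SU_{2,1}(\mathcal{O}_{-3}) \to SU_{2,1}(\mathcal{O}_{-3}/\mathfrak{a})\bigr)$ is torsion free. One would then compute $e({\mathbb B}/\Gamma(\mathfrak{a}))$ either directly from the index $[SU_{2,1}(\mathcal{O}_{-3}):\Gamma(\mathfrak{a})]$ via Prasad's volume formula, or by locating $\Gamma(\mathfrak{a})$ inside a suitable tower of known-volume Picard modular groups and dividing. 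The specific example realizing $e=1$ is the one constructed in Parker's paper; the verification is that the resulting toroidal compactification has Chern numbers $c_{1}^{2} = c_{2} = 3$ modulo the cuspidal correction, matching $\tfrac{8\pi^{2}}{3}\cdot 1$.

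The main obstacle, and the reason this is a genuine theorem rather than a triviality, lies entirely in the existence half: producing a torsion free lattice whose Euler number is as small as $1$. A priori nothing forces the minimum over non-compact torsion free $\Gamma$ to be achieved: the integrality (\ref{Dim2HirVolFormula}) only guarantees $e \geq 1$. One must actually build such a lattice and verify simultaneously that (a) it is torsion free, (b) it is non-uniform, and (c) the integer $e$ equals $1$. Condition (a) is delicate because $SU_{2,1}(\mathcal{O}_{-3})$ contains many torsion elements of orders dividing $6$ coming from the units of $\mathcal{O}_{-3}$, so one must pass to a sufficiently deep congruence level — yet not so deep that $e$ grows beyond $1$. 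Balancing these two requirements is where the arithmetic input from ${\mathbb Q}(\sqrt{-3})$ specifically (as opposed to other imaginary quadratic fields, which give larger minimal Euler numbers) is essential, and it is precisely for this reason that the subsequent sections of the paper focus on co-abelian Picard modular compactifications over ${\mathbb Q}(\sqrt{-3})$.
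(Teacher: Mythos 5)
Your proposal cannot be matched against an internal argument, because the paper does not prove this statement at all: it is quoted as Parker's theorem \cite{P} and used as a black box. Judged on its own merits, your lower-bound half is correct and is exactly the paper's formula (\ref{Dim2HirVolFormula}): for torsion free $\Gamma$ the quotient ${\mathbb B}/\Gamma$ has a smooth toroidal compactification whose boundary $T$ is a disjoint union of elliptic curves, so $e({\mathbb B}/\Gamma) = e\left( ({\mathbb B}/\Gamma)' \right) \in {\mathbb Z}$, and ${\rm vol}({\mathbb B}/\Gamma) = \frac{8\pi^2}{3}e({\mathbb B}/\Gamma) > 0$ forces $e({\mathbb B}/\Gamma) \geq 1$. (This is the same bookkeeping as the paper's Lemma \ref{ECATFTCIsNum-1Curves}.)

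The genuine gap is in the sharpness half: you never actually exhibit a non-compact torsion free lattice with $e = 1$. The congruence-subgroup route is only described in the conditional ("one would then compute\dots"), and at the decisive moment you write that the example "is the one constructed in Parker's paper" --- which is invoking the theorem you are supposed to be proving. Your proposed verification is also garbled: for the relevant compactification $X'$ one has $c_1^2(X') = -1$ and $c_2(X') = 1$ (blow-up of an abelian surface at one point); what equals $3$ is the logarithmic invariant $\bar{c}_1^{\,2} = 3\bar{c}_2 = 3$ of the pair $(X',T)$, not "$c_1^2 = c_2 = 3$ modulo the cuspidal correction." The clean way to close the gap --- and the one the paper itself supplies immediately after the statement --- needs no congruence-subgroup arithmetic at all: Hirzebruch's elliptic configuration $D_{-3}^{\rm Hir} \subset A_{-3} = E_{-3} \times E_{-3}$ from \cite{Hir84} has a single singular point and is proportional in the sense of (\ref{AbelianProportionality}) by Holzapfel \cite{Ho04}; hence by Theorem \ref{HolzapfelsResults_On_CoAbelianTFTC}(i) the blow-up of $A_{-3}$ at that point is a torsion free toroidal compactification $({\mathbb B}/\Gamma_{-3}^{\rm Hir})'$, automatically non-compact (its toroidal divisor is nonempty), and Lemma \ref{ECATFTCIsNum-1Curves} gives $e({\mathbb B}/\Gamma_{-3}^{\rm Hir}) = 1$, i.e.\ ${\rm vol}({\mathbb B}/\Gamma_{-3}^{\rm Hir}) = \frac{8\pi^2}{3}$. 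Picard modularity over ${\mathbb Q}(\sqrt{-3})$ (Holzapfel \cite{Ho86}) is a bonus, not needed for sharpness.
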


\begin{lemma}   \label{ECATFTCIsNum-1Curves}
Suppose that the blow-down
\[
\xi : (X'= ( {\mathbb B} / \Gamma , T = X'  \setminus  ( {\mathbb B} / \Gamma))  \longrightarrow (X, D = \xi (T))
\]
  of the smooth rational $(-1)$-curves on a  torsion free toroidal compactification  produces the abelian surface $X$ with the proportional elliptic configuration $D$.
  Then the Euler number $e( {\mathbb B} / \Gamma)$ of ${\mathbb B} / \Gamma$ equals the number $s$ of the smooth rational $(-1)$-curves on $X'$
  and the number of the singular points of  $D$.
\end{lemma}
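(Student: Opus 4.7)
The plan is to compute $e(\mathbb{B}/\Gamma)$ by Euler-characteristic bookkeeping, using additivity under the open-closed stratification $X' = (\mathbb{B}/\Gamma) \sqcup T$, the blow-up formula for $\xi : X' \to X$, and the vanishing of the Euler numbers of abelian surfaces and of elliptic curves.

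The first step is to identify the smooth rational $(-1)$-curves on $X'$ with the $\xi$-exceptional divisors. Because $X$ is an abelian surface, it contains no rational curves, so every rational curve on $X'$ is $\xi$-contracted and must lie in the exceptional locus. As recalled in the introduction, $X'$ is obtained from $X$ by a single blow-up at each point of $D^{\rm sing}$; therefore the exceptional locus of $\xi$ is a disjoint union of $|D^{\rm sing}|$ smooth rational $(-1)$-curves, and this forces
\[
s = |D^{\rm sing}|.
\]

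The second step passes to Euler numbers. Each simple blow-up raises the topological Euler number by $1$, so $e(X') = e(X) + |D^{\rm sing}|$; since $e(X) = 0$ for the abelian surface $X$, this gives $e(X') = |D^{\rm sing}|$. On the other hand, $T = \bigsqcup_{i=1}^{h} T_i$ is a disjoint union of smooth elliptic curves, each with $e(T_i) = 0$, whence $e(T) = 0$. Additivity of the Euler characteristic along the decomposition $X' = (\mathbb{B}/\Gamma) \sqcup T$ then yields
\[
e(\mathbb{B}/\Gamma) \;=\; e(X') - e(T) \;=\; |D^{\rm sing}| \;=\; s,
\]
which is the asserted equality.

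I expect no essential obstacle, as the argument reduces to standard Euler-characteristic additivity. The only delicate point lies in the first step: one must genuinely invoke the absence of rational curves on an abelian surface to rule out the existence of a smooth rational $(-1)$-curve on $X'$ that is not exceptional over a singular point of $D$, and one must use that the blow-up description of $X'$ recorded in the introduction produces exactly one such $(-1)$-curve per point of $D^{\rm sing}$.
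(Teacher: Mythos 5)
Your proof is correct and follows essentially the same route as the paper: Euler-characteristic additivity over $X' = (\mathbb{B}/\Gamma) \sqcup T$ with $e(T)=0$, the blow-up formula $e(X') = e(X) + |D^{\rm sing}|$, and $e(X)=0$ for the abelian surface $X$. The only difference is that you explicitly justify the identification $s = |D^{\rm sing}|$ via the absence of rational curves on an abelian surface, a point the paper leaves implicit in the phrase ``$X'$ is obtained from $X$ by blowing up $s$ points.''
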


\begin{proof}

 The Euler number $e( {\mathbb B} / \Gamma ) = e( {\mathbb B} / \Gamma )'$ since the toroidal compactifying divisor $T = ( {\mathbb B} / \Gamma )'\setminus ({\mathbb B} / \Gamma )$ of ${\mathbb B} / \Gamma$ consists of smooth irreducible elliptic curves $T_i$ and $e(T_i)=0$.
 Recall that the abelian minimal model $X$ of $X'= ( {\mathbb B} / \Gamma )'$ has vanishing Euler number $e(X)=0$ and $X'$ is obtained from $X$ by blowing up $s$ points.
 Therefore $e(X') = e(X) + s = s$ and $e( {\mathbb B} / \Gamma ) = s$.

\end{proof}

Let $\mathcal{O}_{-3} = {\mathbb Z} + \omega _{-3} {\mathbb Z}$ with $\omega _{-3} = e^{\frac{ \pi i}{3}}$ be the ring of Eisenstein integers, i.e., the integers ring of the imaginary quadratic number field ${\mathbb Q}( \sqrt{-3})$.
Consider the elliptic curve $E_{-3} = {\mathbb C} / \mathcal{O}_{-3}$ and the abelian surface $A_{-3} = E_{-3} \times E_{-3}$.
In \cite{Hir84}  Hirzebruch constructs a series $\{ X_n \} _{n=1} ^{\infty}$ of minimal surfaces of general type with
 $\lim\limits _{n \to \infty} \frac{c_1^2 (X_n)}{c_2(X_n)} = 3$.
 The surfaces $X_n$ are birational to branched covers of $A_{-3}$, ramified over an elliptic configuration $D_{-3} ^{\rm Hir} = \sum\limits _{i=1} ^4 D_i ^{\rm Hir} \subset A_{-3}$ with a single singular point $\check{o}_{A_{-3}}$.
 In \cite{Ho04} Holzapfel shows that $D_{-3} ^{\rm Hir}$ is a proportional elliptic configuration on $A_{-3}$, so that the blow-up $A_{-3}'$ of $A_{-3}$ at the origin $\check{o}_{A_{-3}}$ is a torsion free toroidal compactification $A'_{-3} = ( {\mathbb B} / \Gamma _{-3} ^{\rm Hir} )'$ with a single smooth rational $(-1)$-curve.
 Moreover, in \cite{Ho86} Holzapfel proves that $A'_{-3} = ( {\mathbb B} / \Gamma _{-3} ^{\rm Hir} )'$ is Picard modular over ${\mathbb Q}( \sqrt{-3})$.
 By Lemma \ref{ECATFTCIsNum-1Curves} the non-compact co-abelian torsion free Hirzebruch's example ${\mathbb B} / \Gamma _{-3} ^{\rm Hir}$ has
  Euler number $e( {\mathbb B} / \Gamma _{-3} ^{\rm Hir}) =1$ and minimal volume ${\rm vol} ( {\mathbb B} / \Gamma _{-3} ^{\rm Hir}) = \frac{8 \pi ^2}{3}$.

In \cite{EmeryStover}, Emery and Stover express the minimal volume $v_{-d,n}$ of a quotient $PU_{n,1} / \Gamma  $ by a non-uniform Picard modular lattice
 $\Gamma < PU_{n,1}$ over ${\mathbb Q}( \sqrt{-d})$ by the  $L$-function $L_{-d} = \frac{\zeta _{-d}}{\zeta}$, associated with Dedekind zeta function $\zeta _{-d}$ of ${\mathbb Q}( \sqrt{-d})$ and the Riemann zeta function $\zeta$.
 They estimate the number of the isomorphism classes of the Picard modular lattices $\Gamma < PU_{n,1}$ over ${\mathbb Q}( \sqrt{-d})$  with minimal volume
  ${\rm vol} ( PU_{n,1} / \Gamma ) = v_{-d,n}$,
  For all $n \geq 2$, the non-uniform arithmetic lattices $\Gamma < PU_{n,1}$ of smallest ${\rm vol} ( PU_{n,1} / \Gamma) = v_{-d,n}$ are shown to be Picard modular over ${\mathbb Q}( \sqrt{-3})$.
  For an even $n = 2k$, there are exactly two isomorphism classes of non-uniform lattices of  $ PU_{2k,1}$ with minimal  co-volume $v_{-3,2k}$.
  For an odd $n = 2k+1 \not \equiv 7 ( {\rm mod} \ \ 8)$ there is a unique  isomorphism class of non-uniform lattices of $PU_{2k+1,1}$ with  minimal co-volume $v_{-3,2k+1}$.

  The previous work \cite{Stover} of Stover establishes that the minimal volume of a torsion  non-compact arithmetic quotient ${\mathbb B}^2 / \Gamma$ is
   $\frac{ \pi ^2}{27}$.
   Moreover, he proves that any torsion free arithmetic  ${\mathbb B}  / \Gamma$ of ${\rm vol} ( {\mathbb B} / \Gamma) = \frac{8 \pi^2}{3}$ covers (at least) one of the two non-isomorphic torsion Picard modular surfaces ${\mathbb B} ^2 / \Gamma _{-3}$ or ${\mathbb B}^2 / \Gamma _{-3}'$ over ${\mathbb Q}( \sqrt{-3})$  with   ${\rm vol} ( {\mathbb B}^2 / \Gamma _{-3}) = {\rm vol} ( {\mathbb B}^2 / \Gamma '_{-3}) = \frac{\pi ^2}{27}$.
   Stover's \cite{Stover}  provides a complete list of representatives of the isomorphism classes of torsion free $\Gamma < \Gamma _{-3} \cap \Gamma '_{-3}$ with ${\rm vol} ( {\mathbb B} / \Gamma ) = \frac{8 \pi ^2}{3}$.

   The   non-compact   quotients ${\mathbb B}^2 / \Gamma$  by torsion  non-arithmetic lattices $\Gamma < SU_{2,1}$ are not expected to be of minimal volume $\frac{\pi ^2}{27}$.

\begin{theorem}    \label{AllVolumes}
For any admissible  value $\frac{8 \pi ^2}{3}m \in \frac{8 \pi ^2}{3} {\mathbb N}$ of the volume of a quotient ${\mathbb B} / \Gamma$ of ${\mathbb B}$ by a torsion free lattice $\Gamma < SU_{2,1}$,  there is a non-compact, co-abelian,  torsion free, Picard modular  ${\mathbb B} / \Gamma _{-3,m}$ over ${\mathbb Q}( \sqrt{-3})$, with volume
 \[
 {\rm vol} ( {\mathbb B} / \Gamma _{-3,m} ) = \frac{8 \pi ^2}{3}m.
 \]
\end{theorem}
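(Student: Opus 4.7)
The plan is to bootstrap from Hirzebruch's minimal-volume example $A'_{-3} = ({\mathbb B}/\Gamma_{-3}^{\rm Hir})'$, which has Euler number $1$ and volume $\frac{8\pi^2}{3}$, by constructing for every $m \in {\mathbb N}$ an isogeny $\mu_m \colon Y_m \to A_{-3}$ of abelian surfaces with $|\ker(\mu_m)| = m$ and then forming the fibered pull-back $Y'_m := Y_m \times_{A_{-3}} A'_{-3}$. By Theorem \ref{HolzapfelsResults_On_CoAbelianTFTC}(iii), this $Y'_m$ is automatically a torsion free toroidal compactification $({\mathbb B}/\Gamma_{-3,m})'$ whose abelian minimal model is $Y_m$, and the second projection ${\rm pr}_2 \colon Y'_m \to A'_{-3}$ is a finite unramified abelian Galois covering of degree exactly $m$. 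Since such coverings multiply Euler numbers, combining with the volume formula (\ref{Dim2HirVolFormula}) will yield ${\rm vol}({\mathbb B}/\Gamma_{-3,m}) = \frac{8\pi^2}{3} m$.

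To build the required $\mu_m$ for an arbitrary positive integer $m$, I would exploit the fact that $E_{-3}[m] \cong ({\mathbb Z}/m{\mathbb Z})^2$ always contains a cyclic subgroup $C_m$ of order exactly $m$. Setting $E_m := E_{-3}/C_m$, the quotient projection $f_m \colon E_{-3} \to E_m$ is a degree-$m$ isogeny, and so is its dual $\hat{f}_m \colon E_m \to E_{-3}$. Then
\[
\mu_m := \hat{f}_m \times {\rm id}_{E_{-3}} \colon Y_m := E_m \times E_{-3} \longrightarrow E_{-3} \times E_{-3} = A_{-3}
\]
is an isogeny of abelian surfaces with $|\ker(\mu_m)| = m \cdot 1 = m$, as required.

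It then remains to verify the four claimed properties of $({\mathbb B}/\Gamma_{-3,m})'$. Co-abelianness is built into Theorem \ref{HolzapfelsResults_On_CoAbelianTFTC}(iii) together with the abelianness of $Y_m$. Non-compactness follows because ${\mathbb B}/\Gamma_{-3,m}$ is a finite cover of the non-compact ${\mathbb B}/\Gamma_{-3}^{\rm Hir}$. Picard modularity over ${\mathbb Q}(\sqrt{-3})$ follows by transitivity of commensurability, since $\Gamma_{-3,m}$ is a finite-index subgroup of $\Gamma_{-3}^{\rm Hir}$ and the latter is commensurable with $SU_{2,1}(\mathcal{O}_{-3})$ by \cite{Ho86}. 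For the volume, the toroidal divisor $T_m$ of $Y'_m$ is a disjoint union of smooth elliptic curves with vanishing Euler characteristic, so $e({\mathbb B}/\Gamma_{-3,m}) = e(Y'_m) = \deg({\rm pr}_2) \cdot e(A'_{-3}) = m \cdot 1 = m$, and plugging into (\ref{Dim2HirVolFormula}) gives ${\rm vol}({\mathbb B}/\Gamma_{-3,m}) = \frac{8\pi^2}{3} m$.

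The one genuine subtlety to flag is arranging $|\ker(\mu_m)| = m$ for \emph{every} positive integer $m$: the naive attempt of using endomorphisms of $A_{-3}$ via $M_{2\times 2}(\mathcal{O}_{-3})$ is obstructed because determinants of such matrices have norms in the set of Loeschian numbers $\{x^2 + xy + y^2 : x,y \in {\mathbb Z}\}$, which misses many positive integers (for instance $2, 5, 6$). The trick of landing in an auxiliary abelian surface $Y_m = E_m \times E_{-3}$ via a cyclic-kernel quotient cleanly sidesteps this, since isogenies out of $E_{-3}$ to possibly different elliptic curves do realize every positive integer degree.
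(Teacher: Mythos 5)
Your proposal is correct and essentially coincides with the paper's proof: both bootstrap from Hirzebruch's volume-$\frac{8\pi^2}{3}$ example by pulling it back along a degree-$m$ isogeny onto $A_{-3}=E_{-3}\times E_{-3}$ via Theorem \ref{HolzapfelsResults_On_CoAbelianTFTC}(iii), then use multiplicativity of the volume (equivalently, of the Euler number) under the resulting degree-$m$ unramified covering ${\rm pr}_2$. The only difference is in how the degree-$m$ isogeny is written down: the paper uses ${\rm id}_{E_{-3}} \times \left( {\mathbb C}/\mathcal{O}_{-3,m} \rightarrow {\mathbb C}/\mathcal{O}_{-3} \right)$ with $\mathcal{O}_{-3,m}={\mathbb Z}+m\mathcal{O}_{-3}$ the order of conductor $m$, which is precisely a cyclic degree-$m$ isogeny onto $E_{-3}$, so that choosing $C_m=\left\langle \frac{1}{m}+\mathcal{O}_{-3} \right\rangle$ makes your dual-isogeny construction agree with the paper's up to isomorphism.
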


\begin{proof}

The co-abelian torsion free toroidal compactification $X' _{-3}=  ( {\mathbb B} / \Gamma _{-3} ^{\rm Hir})'$ has fundamental group
$\pi _1 (X'_{-3}) \simeq ( \pi _1 (A_{-3}, +) = ( \mathcal{O}_{-3} \times \mathcal{O}_{-3}, +)$.
For an arbitrary $m \in {\mathbb N}$ consider the order $\mathcal{O}_{-3,m} = {\mathbb Z} + m \mathcal{O}_{-3}$ of ${\mathbb Q}( \sqrt{-3})$ with conductor
$m=[ \mathcal{O}_{-3} : \mathcal{O}_{-3,m} ]$ and the elliptic curve $E_{-3,m} = {\mathbb C} / \mathcal{O}_{-3,m}$.
Then
\[
\mu _m  : A_{-3,m} := E_{-3} \times E_{-3,m}  \longrightarrow A_{-3} = E_{-3} \times E_{-3},
\]
\[
\mu _m ( u + \mathcal{O}_{-3}, v + \mathcal{O}_{-3,m} )  = (u + \mathcal{O}_{-3}, v + \mathcal{O}_{-3})
\]
is an isogeny of degree $m$.
If $\xi _{-3} : X'_{-3}  \rightarrow A_{-3}$ is the blow-down of the smooth rational $(-1)$-curve on $X'_{-3}$ then the fibered product
\[
X'_{-3,m} := A_{-3,m}  \times _{A_{-3}}  X'_{-3} = ( {\mathbb B} / \Gamma _{-3,m} )'
 \]
 is a torsion free  Picard modular toroidal compactification over ${\mathbb Q}( \sqrt{-3})$, with abelian minimal model $A_{-3,m}$ and the canonical projection ${\rm pr} _2 : X'_{-3,m}    \rightarrow X'_{-3}$ is an unramified covering of degree $m$.
Therefore
\[
{\rm vol} ( {\mathbb B} / \Gamma _{-3,m}) = \deg ({\rm pr_2}) {\rm vol} ( {\mathbb B} / \Gamma _{-3} ^{\rm Hir})  = \frac{8 \pi^2}{3} m.
\]

\end{proof}



\section{Elliptic curves on abelian surfaces}

In order to study the co-abelian torsion free toroidal compactifications $X'=  ( {\mathbb B} / \Gamma )'$ through their associated proportional elliptic configurations, let us make some considerations, concerning the elliptic curves on abelian surfaces.

Let $F$ be an elliptic curve on an abelian surface $A$.
If $\check{o}_F$ is the origin of $F$, then the  translation $\tau ( - \check{o}_F): A \rightarrow A$ by $ - \check{o}_F \in A$ transforms $F$
 into the elliptic curve $F^{o}$ with origin $\check{o}_{F^{o}} = \check{o}_A$.
The identical inclusion ${\rm Id} : (F^{o}, +) \hookrightarrow (A, +)$ is a group homomorphism.
Consider the universal covering $U_{F^{o}} : \widetilde{F^{o}} = ({\mathbb C},+)   \rightarrow (F^{o}, +)$ of $F^{o}$ and fix the point
 $0  \in U^{-1} _{F^{o}} ( \check{o}_{F^{o}} ) = U^{-1} _{F^{o}} ( \check{o_{A}})$.
 Similarly, let $U_A : \widetilde{A} = ({\mathbb C}^2,+) \rightarrow (A,+)$ be the universal covering of $A$ and fix the point $(0,0) \in U_A^{-1} ( \check{o}_A)$.
 Since $U_A$ is unramified and
 $({\rm Id} \circ U_{F^{o}} ) _* \pi _1 ( \widetilde{F^{o}} ) = \{ 1_{\pi _1 (A)} \} = (U_A) _* \{ 1 \} = (U_A)_* \pi _1 ( \widetilde{A}) $,
  the   holomorphic map  ${\rm Id} \circ  U_{F^{o}} : \widetilde{F^{o}} = {\mathbb C} \rightarrow A$ admits an  affine linear lifting
  $\lambda : \widetilde{F{o}} = ( {\mathbb C},+)  \rightarrow \widetilde{A} = ( {\mathbb C}^2, +)$ with $\lambda (0) = (0,0)$.
In other words, there is a ${\mathbb C}$-linear map $\lambda : \widetilde{F^{o}}   \rightarrow \widetilde{A}$, closing the  commutative diagram
 \begin{equation}   \label{LinearLifting}
 \begin{diagram}
 \node{{\mathbb C}}  \arrow{s,r}{U_{F^{o}}}  \arrow{e,t}{\lambda}   \node{{\mathbb C}^2}  \arrow{s,r}{U_A}   \\
 \node{F^{o}}  \arrow{e,t}{{\rm Id}}   \node{A}
 \end{diagram}.
 \end{equation}
If $\lambda (1) = (a,b) \in {\mathbb C}^2$ then $(a,b) \neq (0,0)$ and the line
\[
L(a,b) := \lambda ( {\mathbb C}) =  \{ ( at, bt) \ \ \vert \ \ t \in {\mathbb C} \} \subset {\mathbb C}^2  = \widetilde{A}
\]
through the origin $(0,0) \in \widetilde{A}$ covers the elliptic curve
\[
(F^{o}, +) = \left( L(a,b) / L(a,b) \cap \pi _1 (A), + \right) \simeq ( L(a,b) + \pi _1 (A) / \pi _1 (A), + ) \leq (A,+)
\]
through the origin $\check{o}_A$.
 From now on, we refer to   $(a,b) \in {\mathbb C}^2 \setminus \{ (0,0) \}$ as to  the slope vector of $F^{o} = \{ (at, bt) + \pi _1 (A) \ \ \vert \ \  t \in {\mathbb C} \} / \pi _1(A)$ and $F  = F^{o} + \check{o}_F$.

The non-zero ${\mathbb C}$-linear map $\lambda : {\mathbb C} \rightarrow {\mathbb C}^2$ is a ${\mathbb C}$-linear embedding and restricts to an embedding
\[
\lambda : (\pi _1 (F) = \pi _1 (F^{o}), +)  \longrightarrow (L(a,b), +)
\]
of ${\mathbb Z}$-modules.
The induced homomorphism ${\rm Id} _* : \pi _1 (F^{o}) \rightarrow \pi _1 (A)$ of the fundamental groups coincides with $\lambda$ and
$\lambda ( \pi _1 (F^{o})) = L(a,b) \cap \pi _1 (A)$.
The commutative diagram (\ref{LinearLifting}) reduces to
\[
\begin{diagram}
\node{{\mathbb C} }  \arrow{s,l}{U_{F^{o}}}   \arrow{e,t}{\lambda}   \node{L(a,b)}   \arrow{sw,r}{\zeta _{a,b}}   \\
\node{F^{o}}  \node{\mbox{  }}
\end{diagram}
\]
for the $L(a,b) \cap \pi _1 (A)$-Galois covering $\zeta _{a,b} : (L(a,b), +) \rightarrow (F^{o},+)$.

If $A = E_1 \times E_2$ is a product of elliptic curves and $ab \neq 0$ then the isomorphic image $\lambda ( \pi _1 (F^{o}))$ of $\pi _1 (F) = \pi _1( F^{o})$ is
\[
\lambda ( \pi _1 ( F^{o})) = L(a,b) \cap ( \pi _1 (E_1) \times \pi _1(E_2)) =
\]
\[
= \{ (at, bt) \in {\mathbb C}^2  \ \  \vert \ \  t \in a^{-1} \pi _1 (E_1) \cap b^{-1} \pi _1 (E_2) \},
\]
so that
\[
\pi _1 (F) = \pi _1 (F^{o}) = a^{-1} \pi _1 (E_1) \cap b^{-1} \pi _1 (E_2).
\]
In the case of $b=0$ one has
\[
\lambda ( \pi _1 (F^{o})) = \{ (at, 0) \in {\mathbb C}^2 \ \ \vert \ \  t \in a^{-1} \pi _1 (E_1) \},
\]
whereas
\[
\pi _1 (F) = \pi _1 (F^{o}) = a^{-1} \pi _1 (E_1).
\]

Note that
\[
L(a,b) = \{ (u,v) \in {\mathbb C}^2 \ \ \vert \ \  bu - av = 0 \}
\]
and the complete preimage
\[
U_A^{-1} ( F^{o})  = L(a,b) + ( \pi _1 (E_1) \times \pi _1 (E_2)) =
\]
\[
= \{ (u,v) \in {\mathbb C}^2 \ \ \vert \ \  bu - av \in a \pi _1 (E_2) + b \pi _1 (E_1) \}.
\]
As far as $(a \pi _1 (E_2) + b \pi _1 (E_1), +) \geq (a \pi _1 (E_2),+) \simeq ({\mathbb Z}^, +)$ is a ${\mathbb Z}$-submodule of $({\mathbb C}, +)$ of rank $\geq 2$, the quotient $F':= ( {\mathbb C}, +) / ( a \pi _1 (E_2) + b \pi _1 (E_1), +)$ is a smooth elliptic curve.
The map
\[
\psi _{(a,b)} : E_1 \times E_2 \longrightarrow F'
\]
\[
\psi _{(a,b)} ( u + \pi _1 (E_1), v + \pi _1 (E_2)) = bu - av + a \pi _1 (E_2) + b \pi _1 (E_1)
\]
 is a homomorphism of abelian varieties with kernel
 $\ker ( \psi _{(a,b)} ) = F^{o}$.
 The fibres of $\psi _{(a,b)}$ are elliptic curves on $E_1 \times E_2$, parallel to $F$.
 More precisely, for any point $(P,Q) = ( p + \pi _1 (E_1), q + \pi _1 (E_2)) \in E_1 \times E_2$, the fibre of $\psi _{(a,b)}$ through $(P,Q)$ is
 \[
 \psi ^{-1}_{(a,b)} \psi _{(a,b)} (P,Q) = \{ ( u + \pi _1 (E_1), v + \pi _1 (E_2)) \ \ \vert \ \  b (u-p) -a (v-q) + a \pi _1 (E_2) + b \pi _1 (E_1) \}.
 \]
 Note that the elliptic curve $F^{o}$ does not depend on the slope vector $(a,b)$ but on its class  of proportionality    $\{ (ac, bc) \in {\mathbb C}^2 \ \ \vert \ \  c \in {\mathbb C}^* \}$.
 Altogether, we have proved the following

 \begin{proposition}  \label{EllipticFibration}
 An arbitrary elliptic curve $F$ through the origin on the Cartesian product $E_1 \times E_2$ of elliptic curves $E_1$, $E_2$ is of the form
 \[
 F = \{ (at + \pi _1 (E_1), bt + \pi _1 (E_2) ) \in E_1 \times E_2 \ \ \vert \ \  t \in {\mathbb C} \}
 \]
 for some slope vector $(a,b) \in {\mathbb C}^2 \setminus \{ (0,0) \}$.
 The fundamental group
 \[
 \pi _1 (F) =
 \begin{cases}
a^{-1} \pi _1 (E_1) \cap b^{-1} \pi _1 (E_2)  &  \text{ for $a \neq 0$, $b \neq 0$,   }   \\
 a^{-1}  \pi _1 (E_1)  &  \text{ for $a \neq 0$, $b=0$,}    \\
 b^{-1} \pi _1 (E_2)   &  \text{  for $a=0$, $b \neq 0$.}
 \end{cases}
 \]
 There is a natural fibration
 \[
 \psi _{(a,b)} : (E_1 \times E_2, +)   \longrightarrow     F'= ( {\mathbb C}, +) / ( a \pi _1 (E_2) + b \pi _1 (E_1), +),
 \]
 \[
 \psi _{(a,b)} (u + \pi _1 (E_1), v + \pi _1 (E_2) ) = bu - av + a \pi _1 (E_2) + b \pi _1 (E_1)
 \]
 of $E_1 \times E_2$ by elliptic curves, parallel to
 \[
 F = \ker ( \psi _{(a,b)}  ) = \{ ( u + \pi _1 (E_1), v + \pi _1 (E_2)) \in E_1 \times E_2  \ \  \vert \ \  bu - av \in a \pi _1 (E_2) + b \pi _1 (E_1) \}.
 \]
 \end{proposition}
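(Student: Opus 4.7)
The plan is to follow the construction sketched in the paragraphs preceding the proposition, making it into a structured proof in three stages: reduction to an elliptic subgroup, linearization via the universal cover, and descent to obtain the fibration.

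First, I would reduce to the case where $F$ contains the origin. Translation by $-\check{o}_F$ is an automorphism of $A = E_1 \times E_2$, so it suffices to parametrize the elliptic subgroup $F^o \leq A$ through $\check{o}_A$ and then translate back. The fundamental group is unaffected by this translation. Next I would construct the slope vector: since $\pi_1(\widetilde{F^o}) = 1$ and $U_A$ is an unramified covering, the composition $\mathrm{Id} \circ U_{F^o}$ lifts to a holomorphic map $\lambda : \mathbb{C} \to \mathbb{C}^2$ with $\lambda(0) = (0,0)$. Because $\mathrm{Id}$, $U_{F^o}$, and $U_A$ are all group homomorphisms and $U_A$ is a local isomorphism of Lie groups, $\lambda$ is a continuous homomorphism $(\mathbb{C},+) \to (\mathbb{C}^2,+)$; being holomorphic, it is $\mathbb{C}$-linear, so setting $(a,b) := \lambda(1)$ gives $\lambda(t) = (at,bt)$ and a line $L(a,b) \subset \mathbb{C}^2$. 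This produces the parametrization of $F^o$ (hence of $F$) displayed in the statement.

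For the fundamental group formula I would use that $\lambda$ identifies $\pi_1(F^o)$ with $L(a,b) \cap (\pi_1(E_1) \times \pi_1(E_2))$; translating this through $\lambda(t) = (at,bt)$ gives $\pi_1(F^o) \simeq \{t \in \mathbb{C} : at \in \pi_1(E_1),\, bt \in \pi_1(E_2)\}$, which splits into the three cases according to whether $a$ or $b$ vanishes. For the fibration I would note that $L(a,b)$ is the kernel of the $\mathbb{C}$-linear functional $\Phi(u,v) := bu - av$. Evaluating $\Phi$ on the lattice $\pi_1(E_1) \times \pi_1(E_2)$ produces the $\mathbb{Z}$-submodule $\Lambda := b\pi_1(E_1) + a\pi_1(E_2) \subset \mathbb{C}$, and $\Phi$ descends to a surjective group homomorphism $\psi_{(a,b)} : E_1 \times E_2 \to \mathbb{C}/\Lambda$ with kernel precisely $F^o$. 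The fibres are cosets $F^o + (P,Q)$, i.e.\ elliptic curves parallel to $F$.

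The only genuinely non-mechanical point — and the one I expect to be the main obstacle — is verifying that $\Lambda$ has $\mathbb{Z}$-rank $2$, so that $F' = \mathbb{C}/\Lambda$ is actually an elliptic curve and not a cylinder or a disconnected quotient. If $a = 0$ then $\Lambda = b\pi_1(E_1)$ with $b \neq 0$, so it is a rank-$2$ lattice; symmetrically for $b = 0$. If $ab \neq 0$, then $a\pi_1(E_2)$ alone is already a rank-$2$ submodule of $\mathbb{C}$, so the sum is of rank $\geq 2$, and rank $\leq 2$ holds automatically inside $\mathbb{C} \simeq \mathbb{R}^2$. The rest is routine: tracing the commutative diagram for $\lambda$ gives the parametrization, the homomorphism property of $\psi_{(a,b)}$ is inherited from $\Phi$, and the fibre description follows from $\psi_{(a,b)}^{-1}\psi_{(a,b)}(P,Q) = (P,Q) + \ker(\psi_{(a,b)}) = (P,Q) + F^o$.
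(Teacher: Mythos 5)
Your route is the same as the paper's: translate $F$ to the origin, lift the inclusion $F^{o} \hookrightarrow A$ through the universal covers to a $\mathbb{C}$-linear map $\lambda$, read off the slope vector and the fundamental group from $L(a,b) \cap \pi_1(A)$, and descend the linear functional $\Phi(u,v) = bu - av$ to the fibration $\psi_{(a,b)}$ with kernel $F^{o}$. Most of this is carried out correctly, and your justification that the normalized lift $\lambda$ is a continuous homomorphism, hence $\mathbb{C}$-linear, is actually more careful than the paper's.

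However, there is a genuine error at exactly the point you single out as the main obstacle. The claim that ``rank $\leq 2$ holds automatically inside $\mathbb{C} \simeq \mathbb{R}^2$'' is false: finitely generated subgroups of $(\mathbb{R}^2,+)$ can have any finite rank; for instance $(\mathbb{Z}+i\mathbb{Z}) + \sqrt{2}\,(\mathbb{Z}+i\mathbb{Z})$ is free of rank $4$ and dense in $\mathbb{C}$. The bound rank $\leq$ real dimension holds only for \emph{discrete} subgroups, and discreteness is precisely what is at stake. Moreover, even rank exactly $2$ would not suffice: $\mathbb{Z}+\sqrt{2}\,\mathbb{Z}$ has rank $2$, yet $\mathbb{C}/(\mathbb{Z}+\sqrt{2}\,\mathbb{Z})$ is not Hausdorff, let alone an elliptic curve. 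Taking $E_1 = E_2 = \mathbb{C}/(\mathbb{Z}+i\mathbb{Z})$, $a=1$, $b=\sqrt{2}$ shows that for a general slope vector the module $\Lambda = a\pi_1(E_2)+b\pi_1(E_1)$ really is dense, so no purely formal argument can close this step: you must invoke the hypothesis that $F$ is an elliptic curve, i.e.\ compact, which your argument never uses here. The repair is short. Compactness of $F^{o}$ forces $\pi_1(F^{o}) \simeq L(a,b)\cap\pi_1(A)$ to be a rank-$2$ lattice of the line $L(a,b)$; consequently $U_A^{-1}(F^{o}) = L(a,b)+\pi_1(A) = \Phi^{-1}(\Lambda)$ is closed in $\mathbb{C}^2$, being the preimage of a closed set. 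Since $\Phi$ is a surjective linear, hence open, map, $\Lambda$ is then a closed, countable, therefore discrete subgroup of $\mathbb{C}$; as it contains the rank-$2$ lattice $a\pi_1(E_2)$ (or $b\pi_1(E_1)$), it is itself a rank-$2$ lattice and $F' = \mathbb{C}/\Lambda$ is an elliptic curve --- equivalently, $\mathbb{C}/\Lambda \simeq A/F^{o}$ is the quotient of a compact torus by a closed subtorus. To be fair, the paper is also terse at this spot (it only invokes ``rank $\geq 2$''), but it does not assert the false rank bound, and the description of the closed set $U_A^{-1}(F^{o})$ as $\Phi^{-1}(\Lambda)$, recorded immediately beforehand, is what legitimizes the step.
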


We are going to consider Cartesian squares $E \times E$ of elliptic curves $E$.
From now on, let us denote by
\[
E(a,b) := \{ (at + \pi _1 (E), bt + \pi _1 (E) ) \ \ \vert \ \  t \in {\mathbb C} \}=
\]
\[
= \{ (u + \pi _1 (E), v + \pi _1 (E)) \ \ \vert \ \  bu -av \in a \pi _1 (E) + b \pi _1 (E) \}
\]
the elliptic curve on $E \times E$ through the origin $\check{o}_{E \times E}$, with slope vector $(a,b) \in {\mathbb C}^2 \setminus \{ (0,0) \}$.

\section{Isogeny pull back of an elliptic curve}

 The isogenies of abelian surfaces  lift to ${\mathbb C}$-linear maps of the corresponding universal covers.
 In particular, one can identify the isogenies
 \[
 \mu : E_m \times E_m \longrightarrow E_1 \times E_1
 \]
 with their matrices $\mu \in GL(2, {\mathbb C})$.
 For simplicity, let us  restrict to diagonal
 \[
 \mu = \left( \begin{array}{cc}
 \alpha  &  0  \\
 0  &  \beta
 \end{array}   \right).
 \]

Our aim is to construct unramified      coverings $\mu ': Y'=  ( {\mathbb B} /  \Gamma _2 )'   \rightarrow X'= ( {\mathbb B} / \Gamma _1 )'$ of co-abelian torsion free toroidal compactifications, induced by isogenies $\mu : Y \rightarrow X$ of the corresponding minimal models.
If $D = \sum\limits _{i=1} ^h D_i \subset X$   is the image of the toroidal compactifying divisor
 $\sum\limits _{i=1} ^h T_i = T = ( {\mathbb B} / \Gamma _1 )'   \setminus ( {\mathbb B} / \Gamma _1)$ under the blow-down of the smooth rational  $(-1)$-curves on $X'$, then $\mu ^{-1} (D) = \sum\limits _{i=1} ^h \mu ^{-1} ( D_i) = \sum\limits _{i=1} ^h \sum\limits _{j=1} ^{l_i} D_{ij} \subset Y$ is the image of the toroidal compactifying divisor $( {\mathbb B} / \Gamma _2 )' \setminus ( {\mathbb B} / \Gamma _2)$ under the blow-down of the smooth rational $(-1)$-curves.
 The number of the irreducible components of $\mu ^{-1}(D)$  equals the number of the cusps of ${\mathbb B} / \Gamma _2$.

 The following lemma  provides a general expression for the number  $n( \mu ^{-1}(F))$ of the irreducible components of an isogeny pull back $\mu ^{-1} (F)$ of an elliptic curve $F$ on a Cartesian square of an elliptic curve.

 \begin{lemma}    \label{CoarseCuspLift}
 On the Cartesian square $E_1 \times E_1$ of an elliptic curve $E_1$, let us consider an elliptic curve $F = E_1 (a,b) + (P,Q) \subset E_1 \times E_1$ with slope vector $(a,b) \in {\mathbb C}^2 \setminus \{ (0,0) \}$ through $(P, Q)  = (p + \pi _1(E_1), q + \pi _1 (E_1)) \in E_1 \times E_1$
  for some $p,q \in {\mathbb C}$.
Assume that the elliptic curve $E_2$ and  the complex numbers $\alpha,  \beta \in {\mathbb C}^*$ are subject to
\[
\alpha \pi _1 (E_2) \subseteq \pi _1 (E_1), \ \ \beta \pi _1 (E_2) \subseteq \pi _1 (E_1).
\]
Then  the lattice $\Lambda (F, \mu ) := a \beta \pi _1 (E_2) + b \alpha \pi _1 (E_2)$ of $({\mathbb C}, +)$ is contained in the lattice
 $\Lambda (F) := a \pi _1 (E_1) + b \pi _1 (E_1)$ of $({\mathbb C}, +)$ and
\[
\mu = \left( \begin{array}{cc}
\alpha  &  0  \\
0  &  \beta
\end{array}   \right) : E_2 \times E_2 \longrightarrow E_1 \times E_1
\]
is an isogeny of degree
\[
\deg ( \mu ) = [ \pi _1 (E_1) : \alpha \pi _1 (E_2)] [ \pi _1 (E_1) : \beta \pi _1 (E_2)],
\]
which pulls back $F$ to a disjoint union
\begin{equation}   \label{ExplicitCoarsePreimage}
\mu ^{-1}(F) = \sum\limits _{a \lambda _1 + b \lambda _2 + \Lambda (F,\mu)} \left[ E_2 ( a \beta, b \alpha ) +
\left( \frac{p + \lambda _2}{\alpha} + \pi _1 (E_2), \frac{q - \lambda _1}{\beta} + \pi _1 (E_2) \right) \right]
\end{equation}
of $[ \Lambda (F) : \Lambda (F, \mu )]$ smooth irreducible elliptic curves, parallel to $E_2 ( a \beta , b \alpha )$.
 \end{lemma}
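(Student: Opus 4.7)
The plan is to reduce the lemma to two applications of Proposition \ref{EllipticFibration}: one to write $F$ as a fibre on $E_1 \times E_1$, and one to identify the components of $\mu^{-1}(F)$ as translated fibres on $E_2 \times E_2$.

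First I dispose of the preliminary assertions. The inclusion $\Lambda(F,\mu) \subseteq \Lambda(F)$ is immediate from $\alpha \pi_1(E_2), \beta \pi_1(E_2) \subseteq \pi_1(E_1)$. Since $\alpha,\beta \in \mathbb{C}^*$, the map $\mu = \operatorname{diag}(\alpha,\beta)$ lifts on universal covers to $(u,v) \mapsto (\alpha u,\beta v)$, and its kernel on $E_2 \times E_2$ is the finite group $(\alpha^{-1}\pi_1(E_1)/\pi_1(E_2)) \times (\beta^{-1}\pi_1(E_1)/\pi_1(E_2))$, whose order equals $[\pi_1(E_1):\alpha \pi_1(E_2)] \cdot [\pi_1(E_1):\beta \pi_1(E_2)]$, proving that $\mu$ is an isogeny of the claimed degree.

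By Proposition \ref{EllipticFibration}, $F$ is the fibre of $\psi_{(a,b)}$ through $(P,Q)$, so $(x,y) \in F$ iff $bx - ay \in bp - aq + \Lambda(F)$. Precomposing with $\mu$ gives
\[
\mu^{-1}(F) = \left\{ (s,t) \in E_2 \times E_2 \;\middle|\; b\alpha s - a\beta t \in bp - aq + \Lambda(F) \right\}.
\]
I would then partition the coset $bp - aq + \Lambda(F)$ into its $[\Lambda(F):\Lambda(F,\mu)]$ subcosets of $\Lambda(F,\mu)$, writing each as
\[
bp - aq + a\lambda_1 + b\lambda_2 + \Lambda(F,\mu) \;=\; b(p + \lambda_2) - a(q - \lambda_1) + \Lambda(F,\mu),
\]
with representatives $\lambda_1, \lambda_2 \in \pi_1(E_1)$. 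Applying Proposition \ref{EllipticFibration} on $E_2 \times E_2$ with slope vector $(a\beta, b\alpha)$ (which is nonzero because $(a,b) \neq (0,0)$ and $\alpha\beta \neq 0$), the piece of $\mu^{-1}(F)$ belonging to a given subcoset is exactly the fibre of $\psi_{(a\beta,b\alpha)}$ through $\bigl((p+\lambda_2)/\alpha + \pi_1(E_2),\,(q-\lambda_1)/\beta + \pi_1(E_2)\bigr)$, i.e.\ a translate of the smooth irreducible elliptic curve $E_2(a\beta,b\alpha)$. Distinct subcosets yield disjoint fibres, which proves (\ref{ExplicitCoarsePreimage}).

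The main obstacle I expect is purely bookkeeping: verifying that the indexing in (\ref{ExplicitCoarsePreimage}) depends only on the class of $a\lambda_1 + b\lambda_2$ modulo $\Lambda(F,\mu)$, and that different representatives of the same class merely select different base points of the \emph{same} translate of $E_2(a\beta,b\alpha)$. This follows because the defining condition $b\alpha s - a\beta t \in b(p+\lambda_2) - a(q-\lambda_1) + \Lambda(F,\mu)$ is invariant under such changes of representative, so the fibre is genuinely determined by the coset alone.
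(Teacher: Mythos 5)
Your proposal is correct and takes essentially the same route as the paper: the paper's epimorphism $\nu : {\mathbb C}/\Lambda(F,\mu) \rightarrow {\mathbb C}/\Lambda(F)$ and its identification $\mu^{-1}E_1(a,b) = \psi_2^{-1}(\ker \nu)$ are precisely your partition of the coset $bp - aq + \Lambda(F)$ into $\Lambda(F,\mu)$-subcosets, each cut out as a fibre of $\psi_{(a\beta, b\alpha)}$, and both arguments rest on the fibration of Proposition \ref{EllipticFibration} together with the index $[\Lambda(F):\Lambda(F,\mu)]$. The only cosmetic difference is that the paper first translates $F$ to the origin and organizes the factorization $\psi_1 \circ \mu = \nu \circ \psi_2$ in a commutative diagram, whereas you carry the base point $(P,Q)$ through the coset condition directly.
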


 \begin{proof}

The assumptions $\alpha \pi _1 (E_2) \subseteq \pi _1 (E_1)$ and $\beta \pi _1 (E_2) \subseteq \pi _1 (E_1)$ suffice for the diagonal
 $\mu : E_2 \times E_2 \rightarrow  E_1 \times E_1$ to be a homomorphism of abelian varieties.
Due to $\alpha \neq 0$, $\beta \neq 0$, the homomorphism $\mu$ is surjective and with a finite kernel.
In other words, $\mu$ is an isogeny.
According to
\[
\ker ( \mu ) = \left( \frac{1}{\alpha} \pi _1 (E_1) / \pi _1 (E_2) \right) \times \left( \frac{1}{\beta} \pi _1 (E_1) / \pi _1 (E_2) \right),
\]
 $\mu$  is of degree
\[
\deg ( \mu) = | \ker (\mu )| =  [ \pi _1 (E_1) : \alpha \pi _1 (E_2)][ \pi _1 (E_1) : \beta \pi _1 (E_2)].
\]
In order to describe the pull back $\mu ^{-1}(F)$ of $F = E_1 (a,b) + (P,Q)$ under $\mu$, let us note that $(P_o, Q_o) := \left( \frac{p}{\alpha} + \pi _1 (E_2), \frac{q}{\beta} + \pi _1 (E_2) \right) \in \mu ^{-1} (P,Q)$.
Straightforwardly,   $\mu ^{-1} (F) = \mu ^{-1} E_1 (a,b) + (P_o,Q_o)$ and   the description of $\mu ^{-1}(F)$ reduces to the description of the subgroup
 $( \mu ^{-1} E_1 (a,b), +) \leq ( E_2 \times E_2, +)$.

Consider the group homomorphism
\[
\psi _1 : (E_1 \times E_1, +) \longrightarrow ( E_1 (a,b)', +) = \left( {\mathbb C} / \Lambda (F), + \right),
\]
\[
\psi _1 ( u + \pi _1 (E_1), v + \pi _1 (E_1)) = bu - av + a \pi _1 (E_1) + b \pi _1 (E_1) =  bu - av + \Lambda (F)
\]
with kernel $\ker ( \psi _1) = (E_1(a,b), +)$.
The composition
\[
\psi _1 \circ \mu : (E_2 \times E_2, +) \longrightarrow (E_1(a,b)' = {\mathbb C} / \Lambda (F), +)
\]
\[
\psi _1 \circ \mu (u + \pi _1 (E_2), v + \pi _1 (E_2)) = b \alpha u - a \beta v + a \pi _1 (E_1) + b \pi _1 (E_1) = b \alpha u - a \beta v + \Lambda (F)
\]
is a group homomorphism with kernel
\[
\ker( \psi _1 \circ \mu) = ( \psi _1 \circ \mu ) ^{-1} ( \check{o}) = \mu ^{-1} \psi _1 ^{-1} ( \check{o}) =
\mu ^{-1} \ker( \psi _1) = \mu ^{-1} E_1 (a,b).
\]
Note that the group homomorphism
\[
\psi _2 : (E_2 \times E_2, +) \longrightarrow \left( E_2 (a \beta , b \alpha )'= {\mathbb C} /  \Lambda (F,\mu), + \right),
\]
\[
\psi _2 ( u + \pi _1 ( E_2), v + \pi _1 (E_2)) = b \alpha u - a \beta v + a \beta \pi _1 (E_2) + b \alpha \pi _1 (E_2) =
 b \alpha u - a \beta v + \Lambda (F,\mu)
\]
 has kernel $\ker (\psi _2) = E_2 (a \beta, b \alpha) \subseteq \ker( \psi _1 \circ \mu)$, due to  $\Lambda (F, \mu)  \subseteq  \Lambda (F)$.
 Therefore  $\psi _1 \circ \mu$ factors through $\psi _2$ and the natural epimorphism
\[
\nu : (E_2 (a \beta, b \alpha)', +) \longrightarrow (E_1 (a,b)', +),
\]
\[
\nu ( x + a \beta \pi _1 (E_2) + b \alpha \pi _1 (E_2)) = x + a \pi _1 (E_1) + b \pi _1 (E_1) \ \ \mbox{  for  } \ \ \forall x \in {\mathbb C}.
\]
In other words, there is a commutative diagram
\[
\begin{diagram}
\node{E_2 \times E_2} \arrow{e,t}{\mu}  \arrow{s,r}{\psi _2}  \node{E_1 \times E_1}  \arrow{s,r}{\psi _1}  \\
\node{E_2 ( a \beta , b \alpha)'}   \arrow{e,t}{\nu}  \node{E_1 (a,b)'}
\end{diagram}
\]
of homomorphisms of additive groups.
As a result,
\[
\mu ^{-1} E_1 (a,b) = ( \psi _1 \circ \mu) ^{-1} ( \check{o}) =
 ( \nu \circ \psi _2) ^{-1} ( \check{o})  =  \psi _2 ^{-1} \ker( \nu)
\]
consists of $| \ker(\nu)|$ smooth irreducible elliptic  components, parallel to $\ker ( \psi _2) = E_2( a \beta , b \alpha)$.
We claim that
\begin{equation}   \label{PulledBackKernel}
\psi _2 ^{-1} ( \ker( \nu)) = \sum\limits _{a \lambda _1 + b \lambda _2 + \Lambda (F, \mu)}  \left[ E_2 (a \beta, b \alpha) +
 \left( \frac{\lambda _2}{\alpha} + \pi _1 (E_2), - \frac{\lambda _1}{\beta} + \pi _1(E_2) \right) \right]
 \end{equation}
with a  summation  over all $a \lambda _1 + b \lambda _2 + \Lambda (F, \mu) \in \Lambda (F) / \Lambda (F, \mu) = \ker (\nu)$.
The inclusion  $E_2 (a \beta, b \alpha) + \left( \frac{\lambda _2}{\alpha} + \pi _1 (E_2), - \frac{\lambda _1}{\beta} + \pi _1 (E_2) \right) \subseteq
 \psi _2 ^{-1} ( \ker( \nu ))$ is immediate for the group homomorphism $\psi _2$ with $\ker (\psi _2) = E_2 (a \beta, b \alpha)$ and for  any
 $a \lambda _1 + b \lambda _2 + \Lambda (F, \mu) \in  \ker (\nu)$.
Towards the opposite inclusion
\begin{equation}     \label{NonTrivialInclusion}
\psi _2 ^{-1} ( \ker( \nu)) \subseteq \sum\limits _{a \lambda _1 + b \lambda _2 + \Lambda (F, \mu)} \left[ E_2 (a \beta, b \alpha) +
\left( \frac{\lambda _2}{\alpha} + \pi _1 (E_2), - \frac{\lambda _1}{\beta} + \pi _1 (E_2) \right) \right],
\end{equation}
let us pick up a point $(u + \pi _1 (E_2), v + \pi _1 (E_2)) \in \psi _2 ^{-1} ( \ker( \nu))$.
Then
\[
\psi _2 ( u + \pi _1 (E_2), v + \pi _1 (E_2)) = b \alpha u - a \beta v + \Lambda (F,\mu) =
 a \lambda _1 + b \lambda _2 + \Lambda (F,\mu) \in   \ker (\nu)
\]
for some $\lambda _1, \lambda _2 \in \pi _1 (E_1)$.
Straightforwardly,
\[
( u + \pi _1 (E_2), v + \pi _1 (E_2)) +
\left(  - \frac{\lambda _2}{\alpha} + \pi _1 (E_2), \frac{\lambda _1}{\beta} + \pi _1 (E_2) \right)
\in E_2(a \beta, b \alpha) = \ker( \psi _2),
\]
whereas
\[
( u + \pi _1(E_2), v + \pi _1 (E_2) ) \in E_2 (a \beta, b \alpha)  +
\left( \frac{\lambda _2}{\alpha}  + \pi _1 (E_2), - \frac{\lambda _1}{\beta} + \pi _1 (E_2) \right),
\]
which suffices for (\ref{NonTrivialInclusion}) and (\ref{PulledBackKernel}).
Altogether, we have proved that
\[
\mu ^{-1}(F) = \psi _2 ^{-1} ( \ker( \nu)) + (P_o,Q_o) =
\]
\[
= \sum\limits _{a \lambda _1 + b \lambda _2 + \Lambda (F, \mu)}  \left[ E_2(a \beta, b \alpha) + \left( \frac{p + \lambda _2}{\alpha} + \pi _1(E_2),
\frac{q - \lambda _1}{\beta} + \pi _1 (E_2) \right) \right].
\]

 \end{proof}

In order to specify  Lemma \ref{CoarseCuspLift}, let us recall that the integers ring $\mathcal{O}_{-d} = {\mathbb Z} + \omega _{-d} {\mathbb Z}$
of an imaginary quadratic number field ${\mathbb Q} ( \sqrt{-d})$ is a free ${\mathbb Z}$-module of rank $2$, generated by $1$ and
\[
\omega _{-d} = \begin{cases}
\sqrt{-d}   &  \mbox{ for $-d \not \equiv 1 {\rm mod}  \ \  4$,  }   \\
\frac{1 + \sqrt{-d}}{2}   &  \mbox{  for $-d \equiv 1 {\rm mod} \ \  4$.  }
\end{cases}
\]
An order $\mathcal{O}$ of a number field $K$ is a subring, which is a ${\mathbb Z}$-module with $\mathcal{O}  \otimes _{\mathbb Z} {\mathbb Q} = K$.
The orders of an imaginary quadratic number field ${\mathbb Q}( \sqrt{-d})$  are of the form
\[
\mathcal{O}_{-d,m} = {\mathbb Z} + m \mathcal{O}_{-d} = {\mathbb Z} + m \omega _{-d} {\mathbb Z}
\]
for some natural number $m = [ \mathcal{O}_{-d} : \mathcal{O}_{-d,m}]$, called the conductor of $\mathcal{O}_{-d,m}$.
All the orders $\mathcal{O}_{-d,m}$ are ${\mathbb Z}$-submodules of the maximal order $\mathcal{O}_{-d}$.

\begin{lemma}    \label{EndEqualPi1}
If the fundamental group of an elliptic curve  $E_{-d,m} = {\mathbb C} / \mathcal{O}_{-d,m}$     is an order $\mathcal{O}_{-d,m}$ of
 ${\mathbb Q}( \sqrt{-d})$, then the endomorphism ring $R_{-d,m} = End (E_{-d,m} ) = \mathcal{O}_{-d,m}$ coincides with the fundamental group.
\end{lemma}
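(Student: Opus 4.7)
The plan is to identify $\operatorname{End}(E_{-d,m})$ with the multiplier subring
\[
M(\mathcal{O}_{-d,m}) := \{\alpha \in \mathbb{C} \;\vert\; \alpha\,\mathcal{O}_{-d,m} \subseteq \mathcal{O}_{-d,m}\} \subset \mathbb{C},
\]
and then to verify $M(\mathcal{O}_{-d,m}) = \mathcal{O}_{-d,m}$ directly from the fact that $\mathcal{O}_{-d,m}$ is a subring of $\mathbb{C}$ containing $1$. For the first step I would repeat the universal covering argument already used for the commutative square (\ref{LinearLifting}): any endomorphism $\varphi : E_{-d,m} \to E_{-d,m}$ sending the origin to itself lifts through $U_{E_{-d,m}} : (\mathbb{C},+) \to (E_{-d,m},+)$ to a holomorphic map $\widetilde{\varphi}: \mathbb{C} \to \mathbb{C}$ with $\widetilde{\varphi}(0)=0$ and $\widetilde{\varphi}(\mathcal{O}_{-d,m}) \subseteq \mathcal{O}_{-d,m}$; the same monodromy comparison that forces $\lambda$ in (\ref{LinearLifting}) to be $\mathbb{C}$-linear forces $\widetilde{\varphi}(z) = \alpha z$ for some $\alpha \in \mathbb{C}$, necessarily with $\alpha\,\mathcal{O}_{-d,m} \subseteq \mathcal{O}_{-d,m}$. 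Conversely, any such $\alpha$ induces an endomorphism of $E_{-d,m}$, and the correspondence $\varphi \leftrightarrow \alpha$ is a ring isomorphism $\operatorname{End}(E_{-d,m}) \simeq M(\mathcal{O}_{-d,m})$.

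For the second step, the inclusion $\mathcal{O}_{-d,m} \subseteq M(\mathcal{O}_{-d,m})$ is immediate, since $\mathcal{O}_{-d,m} = \mathbb{Z} + m\omega_{-d}\mathbb{Z}$ is closed under multiplication. For the reverse inclusion, given $\alpha \in M(\mathcal{O}_{-d,m})$, evaluate at the unit $1 \in \mathcal{O}_{-d,m}$ to obtain $\alpha = \alpha \cdot 1 \in \alpha\,\mathcal{O}_{-d,m} \subseteq \mathcal{O}_{-d,m}$. Combining, $R_{-d,m} = \operatorname{End}(E_{-d,m}) = M(\mathcal{O}_{-d,m}) = \mathcal{O}_{-d,m}$, as asserted. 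There is no real obstacle here: the only point that requires care is invoking the $\mathbb{C}$-linear lifting principle already established in Section 3, so that multiplier ring and endomorphism ring may be identified; the equality with $\mathcal{O}_{-d,m}$ itself then reduces to the trivial observation that a subring of $\mathbb{C}$ containing $1$ is its own multiplier ring inside $\mathbb{C}$.
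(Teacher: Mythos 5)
Your proof is correct, and at its core it runs on the same mechanism as the paper's: identify $\operatorname{End}(E_{-d,m})$ with the multiplier ring $\{\alpha \in {\mathbb C} \ \vert \ \alpha\,\mathcal{O}_{-d,m} \subseteq \mathcal{O}_{-d,m}\}$, get one inclusion from the fact that an order is a subring, and get the other by evaluating at $1 \in \mathcal{O}_{-d,m}$. The difference is packaging, and yours is leaner. The paper first appeals to complex multiplication theory (the period ratio $m\omega_{-d}$ lies in ${\mathbb Q}(\sqrt{-d})$, hence $R_{-d,m}$ is an order $\mathcal{O}_{-d,c}$ of some conductor $c$), then establishes the inclusions $\mathcal{O}_{-d,m} \subseteq R_{-d,m} \subseteq \mathcal{O}_{-d}$ and $R_{-d,m} \subseteq \mathcal{O}_{-d,m}$, and finally converts them into the divisibilities $c \mid m$ and $m \mid c$ to conclude $c = m$. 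That detour is logically redundant: the two inclusions you prove (lattice inside multiplier ring, multiplier ring inside lattice) already force equality, with no need to know in advance that the endomorphism ring is an order or to mention conductors at all. What your version buys is self-containedness and transparency -- the only nonelementary input is the covering-space/Liouville principle that liftings of origin-preserving endomorphisms to ${\mathbb C}$ are ${\mathbb C}$-linear, which is exactly the principle the paper already invokes for diagram (\ref{LinearLifting}), and which the paper's proof also uses tacitly when it infers $R_{-d,m} \subseteq \mathcal{O}_{-d,m}$ from $1 \in \mathcal{O}_{-d,m}$. What the paper's phrasing buys is only that it records the conclusion in the language of conductors of orders, which matches how $\mathcal{O}_{-d,m}$ is used elsewhere in the text; it adds nothing to the logic.
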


\begin{proof}

The period ratio $m \omega _{-d}$ of $E_{-d,m}$ belongs to the imaginary quadratic number field ${\mathbb Q} ( \sqrt{-d})$,
 so that $E_{-d,m}$ has complex multiplication by ${\mathbb Q}( \sqrt{-d})$.
In other words, $R_{-d,m} = \mathcal{O}_{-d,c}$ is an order of ${\mathbb Q}( \sqrt{-d})$ with conductor $c \in {\mathbb N}$.
Note that $m \mathcal{O}_{-d}$ is a subring of $R_{-d,m}$, according to
$(m \mathcal{O}_{-d}) \mathcal{O}_{-d,m} \subseteq m \mathcal{O}_{-d} \subseteq \mathcal{O}_{-d,m}$.
Bearing in mind ${\mathbb Z} \subseteq R_{-d,m}$, one concludes that $\mathcal{O}_{-d,m} \subseteq R_{-d,m}   \subseteq \mathcal{O}_{-d}$.
Therefore the conductor $c = [ \mathcal{O}_{-d} : R_{-d,m}]$ of $R_{-d,m}$ divides the conductor $m = [ \mathcal{O}_{-d} : \mathcal{O}_{-d,m}]$ of $\mathcal{O}_{-d,m}$.

On the other hand, $1 \in \mathcal{O}_{-d,m}$ implies that $c \mathcal{O}_{-d} \subseteq R_{-d,m} \subseteq \mathcal{O}_{-d,m}$, whereas
$ R_{-d,m} \subseteq \mathcal{O}_{-d,m} \subseteq \mathcal{O}_{-d}$.
As a result, $m = [ \mathcal{O}_{-d} : \mathcal{O}_{-d,m}]$ divides $c = [ \mathcal{O}_{-d} : R_{-d,m}]$.

 The natural numbers  $m=c$ coincide, as far as divide each other.

\end{proof}

Note that     $\alpha \mathcal{O}_{-d,m} \subseteq \mathcal{O}_{-d} $ for all $\alpha \in \mathcal{O}_{-d}$.
 Therefore arbitrary $\alpha , \beta \in \mathcal{O}_{-d} \setminus \{ 0 \}$ provide an isogeny
 \[
 \mu = \left( \begin{array}{cc}
 \alpha  &  0   \\
 0  &  \beta
 \end{array}  \right) : E_{-d,m} \times E_{-d,m} \longrightarrow E_{-d} \times E_{-d}
 \]
 for $E_{-d} := E_{-d,1}$. The kernel
 \[
 \ker ( \mu) =
  \left( \frac{1}{\alpha} \mathcal{O}_{-d} / \mathcal{O}_{-d,m}  \right) \times \left( \frac{1}{\beta} \mathcal{O}_{-d} / \mathcal{O}_{-d,m}  \right)
\]
is of order $|\ker( \mu)| = [ \mathcal{O}_{-d} : \alpha \mathcal{O}_{-d,m} ] [ \mathcal{O}_{-d} : \beta \mathcal{O}_{-d,m}]$.
Making use of the inclusions
$
\alpha \mathcal{O}_{-d,m}    \subseteq   \alpha \mathcal{O}_{-d} \subseteq \mathcal{O}_{-d},
$
one computes that
\[
[ \mathcal{O}_{-d} : \alpha \mathcal{O}_{-d,m}] = [ \mathcal{O}_{-d} : \alpha \mathcal{O}_{-d}] [ \alpha \mathcal{O}_{-d} : \alpha \mathcal{O}_{-d,m} ] = |\alpha| ^2 m.
\]
Therefore $\mu$ is of degree $\deg ( \mu) = | \ker( \mu )| = m^2 | \alpha|^2 |\beta|^2$.

For an arbitrary algebraic integer  $\gamma \in \mathcal{O}_{-d} = {\mathbb Z} + \omega _{-d} {\mathbb Z}$ there exist uniquely determined
 $x( \gamma), y ( \gamma) \in {\mathbb Z}$ with $\gamma = x( \gamma) + \omega _{-d} y( \gamma)$.
One checks immediately that
  \[
  x: \mathcal{O}_{-d}  \longrightarrow {\mathbb Z} \ \ \mbox{  and}  \ \ y: \mathcal{O}_{-d} \longrightarrow {\mathbb Z}
  \]
  with $x( \gamma) + \omega _{-d} y( \gamma) = \gamma$ for $\forall \gamma \in \mathcal{O}_{-d}$
   are epimorphisms of $( \mathcal{O}_{-d}, +)$ on $({\mathbb Z}, +)$.

  We are going to specialize Lemma \ref{CoarseCuspLift} in the case of a unique factorization domain $\mathcal{O}_{-d}$, i.e., for
  \[
  d \in \{ 1, 2, 3, 7, 11, 19, 43, 67, 163 \}.
  \]
  If so, then for any $a_1, b_1 \in \mathcal{O}_{-d} \setminus \{ 0 \}$ there exists a greatest common divisor $GCD(a_1, b_1) \in \mathcal{O}_{-d}$ such that
  \[
  a = \frac{a_1}{GCD(a_1, b_1)}, \ \ b = \frac{b_1}{GCD(a_1, b_1)} \in \mathcal{O}_{-d} \setminus \{ 0 \}
  \]
   are relatively prime and
  $E_{-d} ( a_1, b_1) = E_{-d} ( a,b)$.
  Form now on, for ${\mathbb Q} ( \sqrt{-d})$ of class number $1$ and $E_{-d}  = ( {\mathbb C},+) / ( \mathcal{O}_{-d}, +)$, we label the elliptic curves
  $E_{-d} (a,b)$ through the origin of $E_{-d} \times E_{-d}$ by relatively prime $a, b \in \mathcal{O}_{-d}$.
  Then $a \mathcal{O}_{-d} + b \mathcal{O}_{-d} = GCD(a,b) \mathcal{O}_{-d} = \mathcal{O}_{-d}$ implies the existence of $a_o, b_o \in \mathcal{O}_{-d}$
  with $a a_o + b b_o =1$.
  For any $\gamma \in \mathcal{O}_{-d}$ the equality $a a_o + b b_o =1$ is equivalent to $a ( a_o + \gamma b) + b ( b_o - \gamma a) =1$,
   so that $a_o, b_o \in \mathcal{O}_{-d}$ with $aa_o + b b_o =1$ are not uniquely determined.

  \begin{corollary}     \label{cuspLift}
  Let   $\mathcal{O}_{-d}$ be the integers ring of an imaginary quadratic number field ${\mathbb Q}(\sqrt{-d})$ of class number $1$,
$
  x: ( \mathcal{O}_{-d}, +) \longrightarrow ({\mathbb Z},+), \ \
  y: ( \mathcal{O}_{-d}, +)   \longrightarrow ( {\mathbb Z}, +)
$
  be the epimorphisms, satisfying $x(\gamma) + \omega _{-d} y( \gamma) = \gamma$ for $\forall \gamma \in \mathcal{O}_{-d}$,
   $m$ be a natural number, $\mathcal{O}_{-d,m}  = {\mathbb Z} + m \mathcal{O}_{-d}$ be the order of ${\mathbb Q}( \sqrt{-d})$ with conductor $m$,
   $E_{-d,m} = ( {\mathbb C}, +) / ( \mathcal{O}_{-d,m}, +)$ be the elliptic curve with fundamental group $\mathcal{O}_{-d,m}$ and
   $\alpha , \beta \in \mathcal{O}_{-d} \setminus \{ 0 \}$.
  For any  relatively prime $(a,b) \in \mathcal{O}_{-d} ^2 \setminus \{ (0,0) \}$ consider the elliptic curve $F = E_{-d} (a,b) + (P,Q)$ with
   $P = p + \mathcal{O}_{-d}$,  $Q = q + \mathcal{O}_{-d} \in E_{-d}$ for some $p,q \in {\mathbb C}$, the lattice
    $\Lambda ( F, \mu) = a \beta \mathcal{O}_{-d,m} + b \alpha \mathcal{O}_{-d,m}$ and some
  $a_o, b_o \in \mathcal{O}_{-d}$ with $a a_o + b b_o =1$.
  Then the isogeny
  \[
  \mu = \left( \begin{array}{cc}
  \alpha  &  0   \\
  0  &  \beta
  \end{array}  \right) : E_{-d,m} \times E_{-d,m}   \longrightarrow E_{-d} \times E_{-d}
  \]
  of degree $\deg ( \mu) = m^2 | \alpha|^2 |\beta|^2$ pulls back $F \subset E_{-d} \times E_{-d}$ to a disjoint union


  \begin{equation}   \label{ExplicitPreimage}
  \mu ^{-1}(F) = \sum\limits _{\lambda + \Lambda (F, \mu)}  \left[ E_m (a \beta, b \alpha) +
  \left( \frac{ p + b_o \lambda}{\alpha}  + \Lambda _m, \frac{q - a_o \lambda}{\beta}  + \Lambda _m \right) \right]
  \end{equation}
of $[ \mathcal{O}_{-d} : \Lambda (F, \mu)]$ mutually parallel smooth irreducible elliptic  curves.

Let
\[
\delta := GCD(a \beta, b \alpha),  \ \ \xi := \frac{a \beta}{\delta}, \ \  \eta := \frac{b \alpha}{\delta} \in \mathcal{O}_{-d}.
\]
In the case of $(x( \xi), x( \eta)) \neq (0,0)$ introduce $x_o := GCD(x( \xi), x( \eta)) \in {\mathbb N}$ and  define
$y_o := GCD(y( \xi), y(\eta))$ whenever $(y( \xi), y( \eta)) \neq (0,0)$.
Then
\[
[ \mathcal{O}_{-d} : \Lambda (F, \mu)] =
\]
\[
= \begin{cases}
|\delta|^2 GCD(x_o,m)GCD(y_o,m) &  \mbox{ for $(x(\xi), x(\eta)) \neq (0,0)$, $(y(\xi), y(\eta)) \neq (0,0)$,  }  \\
m | \delta |^2 GCD(x_o,m)   &  \mbox{   for  $(x(\xi),x(\eta)) \neq (0,0)$, $y(\xi) = y(\eta) =0$, }  \\
m |\delta|^2 GCD(y_o,m)  &  \mbox{ for  $x(\xi) = x(\eta) =0$, $(y(\xi), y(\eta)) \neq (0,0)$. }
\end{cases}
\]

  \end{corollary}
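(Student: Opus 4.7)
The proof splits into two parts: establishing the explicit decomposition~(\ref{ExplicitPreimage}) for $\mu^{-1}(F)$, and computing the number of components $[\mathcal{O}_{-d}:\Lambda(F,\mu)]$. For the decomposition I would specialize Lemma~\ref{CoarseCuspLift}. Because $\mathcal{O}_{-d}$ has class number one and $GCD(a,b)=1$, the lattice $\Lambda(F)=a\mathcal{O}_{-d}+b\mathcal{O}_{-d}$ collapses to all of $\mathcal{O}_{-d}$, so the sum in Lemma~\ref{CoarseCuspLift} ranges over cosets $\lambda+\Lambda(F,\mu)\in\mathcal{O}_{-d}/\Lambda(F,\mu)$ parametrized by a single element $\lambda\in\mathcal{O}_{-d}$. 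Using the Bezout identity $aa_o+bb_o=1$, I would pick the representatives $\lambda_1=a_o\lambda$ and $\lambda_2=b_o\lambda$, satisfying $a\lambda_1+b\lambda_2=\lambda$, and substitute them into formula~(\ref{ExplicitCoarsePreimage}) of Lemma~\ref{CoarseCuspLift}. A routine check confirms independence of the Bezout pair and of the representative $\lambda$: the ambiguity in shifting $(a_o,b_o)$ to $(a_o+\gamma b,b_o-\gamma a)$ is absorbed by translation along the elliptic factor $E_m(a\beta,b\alpha)$.

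For the index I would factor $\Lambda(F,\mu)=\delta\cdot I$ with $I:=\xi\mathcal{O}_{-d,m}+\eta\mathcal{O}_{-d,m}$ and $GCD(\xi,\eta)=1$, so that multiplication by $\delta$ gives $[\mathcal{O}_{-d}:\Lambda(F,\mu)]=|\delta|^2[\mathcal{O}_{-d}:I]$. The inclusion $m\mathcal{O}_{-d}\subseteq\mathcal{O}_{-d,m}$ together with $GCD(\xi,\eta)=1$ yields $m\mathcal{O}_{-d}=m\xi\mathcal{O}_{-d}+m\eta\mathcal{O}_{-d}\subseteq I$, so the index can be read off inside the quotient $\mathcal{O}_{-d}/m\mathcal{O}_{-d}\simeq({\mathbb Z}/m{\mathbb Z})^2$, with coordinates relative to the basis $\{1,\omega_{-d}\}$. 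Since both $m\xi\omega_{-d}$ and $m\eta\omega_{-d}$ lie in $m\mathcal{O}_{-d}$, the image $H$ of $I$ inside $({\mathbb Z}/m{\mathbb Z})^2$ is simply the additive subgroup generated by the reductions $(x(\xi),y(\xi))$ and $(x(\eta),y(\eta))$.

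Cases~2 and~3 are then immediate: one generator vanishes and the other lies on a single coordinate axis, so $H$ is cyclic of order $m/GCD(x_o,m)$ or $m/GCD(y_o,m)$, yielding the claimed $m|\delta|^2 GCD(x_o,m)$ and $m|\delta|^2 GCD(y_o,m)$. The main obstacle is Case~1, where both generators lie off either axis; here I would need to show that $H$ equals the direct product of its two coordinate projections, equivalently that $(GCD(x_o,m),0)$ and $(0,GCD(y_o,m))$ both belong to $H$. The plan is to combine the Bezout relation $u\xi+v\eta=1$ in $\mathcal{O}_{-d}$ with the auxiliary vanishing generators $m\xi\omega_{-d},m\eta\omega_{-d}$ to produce concrete $\mathcal{O}_{-d,m}$-combinations of $\xi$ and $\eta$ whose image in $({\mathbb Z}/m{\mathbb Z})^2$ realizes a prescribed axis coordinate; controlling both coordinates at once against the ring-theoretic coprimality in the maximal order is the delicate point, and is where the interplay between $\mathcal{O}_{-d,m}$ and $\mathcal{O}_{-d}$ must be fully exploited.
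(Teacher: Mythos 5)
Your first half and your overall reduction coincide with the paper's own proof. The paper, too, obtains (\ref{ExplicitPreimage}) by specializing Lemma \ref{CoarseCuspLift} with $\Lambda (F) = a \mathcal{O}_{-d} + b \mathcal{O}_{-d} = \mathcal{O}_{-d}$ and the Bezout substitution $\lambda _1 = a_o \lambda$, $\lambda _2 = b_o \lambda$; and it, too, computes the index by reducing modulo $m \mathcal{O}_{-d}$: it introduces the epimorphism $f( \gamma ) = ( x( \gamma ) + m {\mathbb Z}, \ y( \gamma ) + m {\mathbb Z})$ with $\ker (f) = m \mathcal{O}_{-d}$, and your subgroup $H$ is exactly its image $f( \xi {\mathbb Z} + \eta {\mathbb Z})$, generated by the reductions of $(x( \xi ), y( \xi ))$ and $(x( \eta ), y( \eta ))$. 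Your Cases 2 and 3 are correct and agree with the paper (a minor quibble: there both generators lie on one coordinate axis, rather than one of them vanishing).

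The genuine gap is Case 1, which you leave as a plan rather than a proof, and the plan cannot be completed: the assertion you identify as needed --- that $H$ equals the product of its two coordinate projections --- is false under the hypotheses of the corollary, and Case 1 of the corollary itself is false. Take $d=1$, $m=3$, $\alpha = \beta = 1$, $a = 1+i$, $b = 3i$; all hypotheses hold ($a$, $b$ are relatively prime in $\mathcal{O}_{-1} = {\mathbb Z}[i]$, which has class number $1$), with $\delta = 1$, $\xi = 1+i$, $\eta = 3i$, hence $x_o = GCD(1,0) = 1$, $y_o = GCD(1,3) = 1$, and Case 1 predicts $| \delta |^2 GCD(x_o,3) GCD(y_o,3) = 1$ component. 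But $\Lambda (F, \mu ) = (1+i) \mathcal{O}_{-1,3} + 3i \, \mathcal{O}_{-1,3} = {\mathbb Z}(1+i) + {\mathbb Z}\, 3i = \{ x + yi \ \vert \ x \equiv y \ ({\rm mod} \ 3) \}$ has index $3$ in $\mathcal{O}_{-1}$, so $\mu ^{-1}(F)$ has three components: here $H = \langle (1,1), (0,3) \rangle$ is the diagonal of ${\mathbb Z}_3 \times {\mathbb Z}_3$, of order $3$, properly contained in the product of its projections, which is all of ${\mathbb Z}_3 \times {\mathbb Z}_3$. A Bezout relation $u \xi + v \eta = 1$ does hold (e.g. $u = -4+4i$, $v = -3i$) and does not prevent this collapse, so no exploitation of coprimality in the maximal order can close your Case 1. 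You should also know that the paper's own proof finishes Case 1 by exactly the inference you were wary of: from the (correct) facts that the images of $x$ and $y$ on $\xi {\mathbb Z} + \eta {\mathbb Z}$ are $x_o {\mathbb Z}$ and $y_o {\mathbb Z}$, it concludes ``therefore'' that $f( \xi {\mathbb Z} + \eta {\mathbb Z})$ equals the full product $[ ( x_o {\mathbb Z} + m {\mathbb Z}) / m {\mathbb Z}] \times [ ( y_o {\mathbb Z} + m {\mathbb Z}) / m {\mathbb Z}]$ --- an invalid implication; in general the Case 1 expression is only a divisor of the true index. The product formula is valid when $H$ contains an element $(\pm 1, 0)$ or $(0, \pm 1)$, which is what happens in Corollary \ref{HirzebruchsPullBack} and in the Section 5 series (there $m = 1$ or $\mu = I_2$), but Corollary \ref{ModifiedHirzebruchsPullBack} inherits the error for general $m$ and $\alpha$: for $\alpha = 1 + 6 \omega _{-3}$, $m = 5$, the curve $E_{-3}( \sqrt{-3} e^{- \pi i /3}, 1)$ pulls back to $5$ components, not $1$. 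So your instinct located the real difficulty precisely; what fails is the statement, not your strategy.
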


  \begin{proof}

For $(a,b) \in \mathcal{O}_{-d}^2 \setminus  \{ (0,0) \}$ with $GCD(a,b)=1$ the lattice
\[
\Lambda (F) := a \pi _1 (E_1) + b \pi _1 (E_1) = a \mathcal{O}_{-d} + b \mathcal{O}_{-d} = \mathcal{O}_{-d}.
\]
Formula (\ref{ExplicitPreimage}) is an  immediate consequence of (\ref{ExplicitCoarsePreimage}) with
\[
\lambda = 1. \lambda = (a a_o + bb_o) \lambda = a( a_o \lambda) + b ( b_o \lambda) \ \ \mbox{  for } \ \   \lambda \in \mathcal{O}_{-d}, \ \
\lambda _1 = a_o \lambda, \ \ \lambda _2 = b_o \lambda.
\]

Towards the explicit calculation of the index $[ \mathcal{O}_{-d} : \Lambda (F, \mu)]$, one makes use of the inclusion
 $m \mathcal{O}_{-d} \subseteq \mathcal{O}_{-d,m}$  and  observes that
\[
\Lambda (F, \mu) = a \beta  \mathcal{O}_{-d,m}  + b \alpha \mathcal{O}_{-d,m}  \supseteq a \beta m \mathcal{O}_{-d} + b \alpha m \mathcal{O}_{-d} =
m (a \beta \mathcal{O}_{-d} + b \alpha \mathcal{O}_{-d}) =
\]
\[
= m GCD(a \beta, b \alpha) \mathcal{O}_{-d} = m \delta \mathcal{O}_{-d}.
\]
The sequence of inclusions
$
m \delta \mathcal{O}_{-d} \subseteq \Lambda (F, \mu) \subseteq \mathcal{O}_{-d}
$
provides
\begin{equation}    \label{FirstReduction}
[ \mathcal{O}_{-d} : \Lambda (F, \mu)] = \frac{[\mathcal{O}_{-d} : m \delta \mathcal{O}_{-d}]}{[ \Lambda (F, \mu) : m \delta \mathcal{O}_{-d}]} =
\frac{m^2 | \delta|^2}{[ \Lambda (F, \mu) : m \delta \mathcal{O}_{-d} ]}.
\end{equation}
The algebraic integers  $\xi = \frac{a \beta}{\delta}$, $\eta = \frac{b \alpha}{\delta} \in \mathcal{O}_{-d}$ are relatively prime in $\mathcal{O}_{-d}$ and the lattice
\[
\delta ^{-1} \Lambda (F, \mu) = \xi \mathcal{O}_{-d,m} + \eta \mathcal{O}_{-d,m} =
\]
\[
= \xi ( {\mathbb Z} + m \omega _{-d} {\mathbb Z}) + \eta ( {\mathbb Z} + m \omega _{-d} {\mathbb Z}) =
 ( \xi {\mathbb Z} + \eta {\mathbb Z}) + m \omega _{-d} ( \xi {\mathbb Z} + \eta {\mathbb Z})
 \]
 contains $m \mathcal{O}_{-d} = m GCD(\xi, \eta) \mathcal{O}_{-d} = \xi ( m \mathcal{O}_{-d}) + \eta ( m \mathcal{O}_{-d})$ by
  $m \mathcal{O}_{-d} \subseteq \mathcal{O}_{-d,m}$.
 Therefore the quotient
 \[
 \delta  ^{-1} \Lambda (F, \mu) / m \mathcal{O}_{-d} =
     [ ( \xi {\mathbb Z} + \eta {\mathbb Z}) + m \omega_{-d} (\xi {\mathbb Z} + \eta {\mathbb Z})] / m \mathcal{O}_{-d} \supseteq
     \]
     \[
\supseteq      [ ( \xi {\mathbb Z} + \eta {\mathbb Z}) + m \mathcal{O}_{-d}] / m \mathcal{O}_{-d}.
      \]
On the other hand, $m \omega _{-d} ( \xi {\mathbb Z} + \eta {\mathbb Z}) \subseteq m \mathcal{O}_{-d}$ implies the opposite inclusion
\[
\delta ^{-1} \Lambda (F, \mu) / m \mathcal{O}_{-d} =
[ ( \xi {\mathbb Z} + \eta {\mathbb Z}) + m \omega _{-d} ( \xi {\mathbb Z} + \eta {\mathbb Z})] / m \mathcal{O}_{-d} \subseteq
\]
\[
\subseteq [ ( \xi {\mathbb Z} + \eta {\mathbb Z}) + m \mathcal{O}_{-d}] / m \mathcal{O}_{-d},
\]
whereas the coincidence
\[
\delta ^{-1} \Lambda (F, \mu) / m \mathcal{O}_{-d} = [ ( \xi {\mathbb Z} + \eta {\mathbb Z}) + m \mathcal{O}_{-d}] / m \mathcal{O}_{-d} \simeq
( \xi {\mathbb Z} + \eta {\mathbb Z}) / [ ( \xi {\mathbb Z} + \eta {\mathbb Z}) \cap m \mathcal{O}_{-d}].
\]
As a result,
\begin{equation}   \label{SecondReduction}
[ \delta ^{-1} \Lambda (F, \mu) : m \mathcal{O}_{-d}] =
 [ ( \xi {\mathbb Z} + \eta {\mathbb Z} ) : ( \xi {\mathbb Z} + \eta {\mathbb Z}) \cap m \mathcal{O}_{-d}].
\end{equation}
Towards the calculation of the last index, consider the group epimorphism
\[
f: ( \mathcal{O}_{-d},+)  \longrightarrow ( {\mathbb Z}_m \times {\mathbb Z}_m, +),
\]
\[
f( \gamma) = ( x( \gamma) + m {\mathbb Z}, \ \  y( \gamma) + m {\mathbb Z}) \ \ \mbox{  for  } \ \ \forall \gamma \in \mathcal{O}_{-d}.
\]
Its kernel is
$
\ker(f) = m {\mathbb Z} + m \omega _{-d} {\mathbb Z} = m ( {\mathbb Z} +  \omega _{-d} {\mathbb Z}) = m \mathcal{O}_{-d}.
$
Let us denote
\[
x_1 := x(\xi), \ \ y_1 := y(\xi), \ \ x_2 := x(\eta), \ \ y_2 := y(\eta) \in {\mathbb Z}
\]
and observe that the lattice
\[
\xi {\mathbb Z} + \eta {\mathbb Z} = \{ (x_1 + y_1 \omega _{-d} ) z_1 + ( x_2 + y_2 \omega _{-d} ) z_2 \ \ \vert \ \  z_1, z_2 \in {\mathbb Z} \},
\]
\begin{equation}   \label{ThirdReduction}
\xi {\mathbb Z} +\eta {\mathbb Z} = \{ (x_1 z_1 + x_2 z_2) + \omega _{-d} ( y_1 z_1 + y_2 z_2) \ \ \vert \ \  z_1, z_2 \in {\mathbb Z} \}.
\end{equation}
Assume that $(x_1, x_2) \neq (0,0)$, $(y_1,y_2) \neq (0,0)$ and put $x_o := GCD(x_1, x_2) \in {\mathbb N}$, $y_o := GCD(y_1, y_2) \in {\mathbb N}$.
Then
\[
x'_1 := \frac{x_1}{x_o}, \ \  x'_2 := \frac{x_2}{x_o} \in {\mathbb Z}
\]
 are relatively prime, as well as
\[
y'_1 := \frac{y_1}{y_o}, \ \  y'_2 := \frac{y_2}{y_o} \in {\mathbb Z}.
\]
That allows to describe the lattice $\xi {\mathbb Z} + \eta {\mathbb Z}$ in the form
\begin{equation}   \label{LatticeDescription}
\xi {\mathbb Z} + \eta {\mathbb Z} = \{  x_o (x'_1 z_1 + x'_2 z_1) + \omega _{-d} y_o ( y'_1 z_1 + y'_2 z_2) \ \ \vert \ \  z_1, z_2 \in {\mathbb Z} \}.
\end{equation}
According to $x'_1 {\mathbb Z} + x'_2 {\mathbb Z} = GCD(x'_1, x'_2) {\mathbb Z} = {\mathbb Z}$, the image of $x: ( \xi {\mathbb Z} + \eta {\mathbb Z}, +) \rightarrow ({\mathbb Z},+)$ is the free ${\mathbb Z}$-module $x_o {\mathbb Z}$.
Similarly, $y'_1 {\mathbb Z} + y'_2 {\mathbb Z} = GCD(y'_1, y'_2) ) {\mathbb Z} = {\mathbb Z}$  reveals that  the image of $y: ( \xi {\mathbb Z} + \eta {\mathbb Z}, +) \rightarrow ({\mathbb Z},+)$ is $ y_o {\mathbb Z}$.
 Therefore the group homomorphism
 \[
 f: ( \xi {\mathbb Z} + \eta {\mathbb Z}, +) \longrightarrow ( {\mathbb Z}_m \times {\mathbb Z}_m, +)
 \]
 has image
 \[
 f ( \xi {\mathbb Z} + \eta {\mathbb Z}) =
  \left[ ( x_o {\mathbb Z} + m {\mathbb Z}) / m {\mathbb Z} \right] \times  \left[  ( y_o {\mathbb Z} + m {\mathbb Z} ) / m {\mathbb Z} \right] =
  \]
  \[
= [ GCD(x_o,m) {\mathbb Z} / m {\mathbb Z} ] \times [ GCD(y_o, m) {\mathbb Z} / m {\mathbb Z}] =
{\mathbb Z} _{ \frac{m}{GCD(x_o,m)}} \times {\mathbb Z} _{ \frac{m}{GCD(y_o,m)}}
 \]
 of cardinality
 \[
 | f( \xi {\mathbb Z} + \eta {\mathbb Z} ) | = \frac{m^2}{GCD(x_o,m) GCD(y_o,m)}.
 \]
 By the Isomorphism Theorem
 \[
 ( \xi {\mathbb Z} + \eta {\mathbb Z}) / [ ( \xi {\mathbb Z} + \eta {\mathbb Z} ) \cap m \mathcal{O}_{-d}] =
 ( \xi {\mathbb Z} + \eta {\mathbb Z}) / [ ( \xi {\mathbb Z} + \eta {\mathbb Z}) \cap \ker (f) ] \simeq f ( \xi {\mathbb Z} + \eta {\mathbb Z}),
 \]
 one concludes that
\[
[ \delta ^{-1} \Lambda (F, \mu) : m \mathcal{O}_{-d}] =
 [ ( \xi {\mathbb Z} + \eta {\mathbb Z}) : ( \xi {\mathbb Z} + \eta {\mathbb Z}) \cap m \mathcal{O}_{-d} ] =
 \]
 \[
= | f( \xi {\mathbb Z} + \eta {\mathbb Z}) | = \frac{m^2}{GCD(x_o, m) GCD(y_o,m)}.
\]
Putting together with (\ref{FirstReduction}), one obtains that
\[
[ \mathcal{O}_{-d} : \Lambda (F, \mu ) ] = | \delta | ^2 GCD(x_o, m) GCD(y_o,m)
\]
for $(x_1,x_2) \neq (0,0)$, $(y_1, y_2) \neq (0,0)$.

If $(y_1,y_2) = (0,0)$ then $(x_1,x_2) \neq (0,0)$, as far as the pair  $\left( \frac{a \beta}{\delta}, \frac{b \alpha}{\delta} \right) = ( \xi, \eta) =
( x_1 + y_1 \omega _{-d}, x_2 + y_2 \omega _{-d}) \neq (0,0)$.
By (\ref{ThirdReduction}) one has
$
\xi {\mathbb Z} + \eta {\mathbb Z} = x_1 {\mathbb Z} + x_2 {\mathbb Z} = GCD(x_1, x_2) {\mathbb Z} = x_o {\mathbb Z}.
$
Note that
\[
( \xi {\mathbb Z} + \eta {\mathbb Z}) \cap m  \mathcal{O}_{-d} =
x_o {\mathbb Z} \cap ( m {\mathbb Z} + m \omega _{-d} {\mathbb Z}) =
\]
\[
= x_o {\mathbb Z} \cap m {\mathbb Z} =
 LCM(x_o,m) {\mathbb Z} =
 \frac{x_o m}{GCD(x_o,m)} {\mathbb Z}
\]
for the least common multiple $LCM(x_o,m)$ of $x_o \in {\mathbb Z}  \setminus \{ 0 \}$ and $m \in {\mathbb N}$.
As a result, (\ref{SecondReduction}) reads as
\[
[ \delta ^{-1} \Lambda (F, \mu) : m \mathcal{O}_{-d}] = \left[  x_o {\mathbb Z} : \frac{x_o m}{GCD(x_o,m)} {\mathbb Z} \right] =
\frac{m}{GCD(x_o,m)}
\]
and (\ref{FirstReduction}) provides
\[
[ \mathcal{O}_{-d} : \Lambda (F, \mu) ] = m | \delta| ^2 GCD(x_o,m) \ \ \mbox{   for  } \ \  (x_1,x_2) \neq (0,0), \ \ y_1 = y_2 =0.
\]

In a similar vein, for $(x_1,x_2) = (0,0)$ there follows $(y_1,y_2) \neq (0,0)$.
According to (\ref{ThirdReduction}),
$
\xi {\mathbb Z} + \eta {\mathbb Z} = \omega _{-d} ( y_1 {\mathbb Z} + y_2 {\mathbb Z}) = \omega _{-d} GCD(y_1,y_2) {\mathbb Z} =  \omega _{-d} y_o{\mathbb Z}.
$
Therefore
\[
( \xi {\mathbb Z} + \eta {\mathbb Z}) \cap m \mathcal{O}_{-d} = \omega _{-d} y_o  {\mathbb Z} \cap ( m {\mathbb Z} + m \omega _{-d} {\mathbb Z}) =
 \omega _{-d} y_o {\mathbb Z}  \cap \omega _{-d}  m {\mathbb Z} = \omega _{-d} ( y_o {\mathbb Z} \cap m {\mathbb Z}) =
\]
\[
= \omega _{-d} LCM(y_o,m) {\mathbb Z} = \frac{y_o  m}{GCD(y_o,m)}  \omega _{-d} {\mathbb Z}
\]
and (\ref{SecondReduction}) provides
\[
[ \delta ^{-1} \Lambda (F, \mu) : m \mathcal{O}_{-d}] = \left[  \omega _{-d} y_o {\mathbb Z} : \frac{y_o m}{GCD(y_0,m)} \omega _{-d} {\mathbb Z} \right] =
\frac{m}{GCD(y_o,m)}.
\]
By (\ref{FirstReduction}), one has
\[
[ \mathcal{O}_{-d} : \Lambda (F, \mu) ] = m | \delta|^2 GCD(y_o,m) \ \ \mbox{   for }  \ \  x_1 = x_2 =0, \ \ (y_1,y_2) \neq (0,0).
\]

  \end{proof}

The immediate application of Corollary \ref{cuspLift} to Hirzebruch's proportional elliptic configuration
\[
D_{-3} ^{\rm Hir} = E_{-3} (1,0) + E_{-3} (0,1) + E_{-3} (1,1) + E_{-3} \left( 1, e^{\frac{\pi i}{3}} \right)   \subset A_{-3}
\]
from \cite{Hir84}  yields the following

\begin{corollary}     \label{HirzebruchsPullBack}
Let us consider the ring  $\mathcal{O}_{-3}$ of Eisenstein integers, the  elliptic curve
$E_{-3,m} = {\mathbb C} / \mathcal{O}_{-d,m}$,  whose fundamental group
$\mathcal{O}_{-d,m} = {\mathbb Z} + m \mathcal{O}_{-3}$ is the order $\mathcal{O}_{-d,m}$ of ${\mathbb Q}( \sqrt{-3})$ with conductor $m \in {\mathbb N}$,
 $E_{-3} := E_{-3,1}$   and  the isogeny
\[
\mu = \left( \begin{array}{cc}
\alpha  &  0  \\
0  &  1
\end{array}  \right) : E_{-d,m} \times E_{-d,m} \longrightarrow E_{-3} \times E_{-3}
\]
 of the corresponding Cartesian squares.
 Denote by
 \[
 n ( \mu ^{-1} (F)) = [ \mathcal{O}_{-3} : a \mathcal{O}_{-d,m} + b \alpha \mathcal{O}_{-d,m}]
 \]
  the number  of the irreducible components of the pull-back $\mu ^{-1} (F)$ of an elliptic curve $F = E_{-3}(a,b) + (P,Q) \subset E_{-3} \times E_{-3}$ by $\mu$  and put $x: ( \mathcal{O}_{-3}, +) \rightarrow ({\mathbb Z}, +)$, $y: ( \mathcal{O}_{-3}, +) \rightarrow ({\mathbb Z},+)$ for  the epimorphisms with
 $x( \alpha) + \omega _{-3} y( \alpha) = \alpha$ for all $\alpha \in \mathcal{O}_{-3}$, $\omega _{-3} := e^{\frac{ \pi i}{3}}$.
  Then $\mu$ is   of degree $\deg ( \mu) = m^2 | \alpha|^2$ and
\[
n ( \mu ^{-1} E_{-3}(1,0)) = m, \ \ n ( \mu ^{-1} E_{-3} (0,1)) = m | \alpha | ^2,
\]
\[
n ( \mu ^{-1} E_{-3} (1,1) )= GCD(y( \alpha), m), \ \ n \left( \mu ^{-1} E_{-3} \left( 1, e^{\frac{ \pi i}{3}} \right) \right) =
 GCD(x( \alpha) + y( \alpha), m).
\]

 In particular, the proportional elliptic configuration $\mu ^{-1} D_{-3} ^{\rm Hir} \subset E_{-d,m} \times E_{-d,m}$ has
 \[
 h ( \mu ^{-1} D_{-3} ^{\rm Hir}) = m + m | \alpha|^2 + GCD(y( \alpha), m) + GCD(x( \alpha ) + y( \alpha), m)
 \]
 smooth elliptic irreducible components.
\end{corollary}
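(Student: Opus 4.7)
The corollary is a direct, four-fold application of Corollary \ref{cuspLift} to the slope vectors $(1,0)$, $(0,1)$, $(1,1)$, $(1,\omega_{-3})$ defining $D_{-3}^{\rm Hir}$, specialised to $d=3$ and $\beta=1$. First I would read off the degree $\deg(\mu)=m^{2}|\alpha|^{2}$ from the general formula $\deg(\mu)=[\mathcal{O}_{-d}:\alpha\mathcal{O}_{-d,m}][\mathcal{O}_{-d}:\beta\mathcal{O}_{-d,m}]$ established immediately before Corollary \ref{cuspLift}, since $[\mathcal{O}_{-3}:\alpha\mathcal{O}_{-3,m}]=m|\alpha|^{2}$ and $[\mathcal{O}_{-3}:\mathcal{O}_{-3,m}]=m$.

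For each of the four slope pairs $(a,b)$ I would compute $\delta=GCD(a,b\alpha)$, form $\xi=a/\delta$, $\eta=b\alpha/\delta$, and extract the integer coordinates $x(\xi),y(\xi),x(\eta),y(\eta)$ with respect to the basis $\{1,\omega_{-3}\}$ of $\mathcal{O}_{-3}$. Then I would invoke the appropriate branch of the three-case index formula of Corollary \ref{cuspLift} to evaluate $n(\mu^{-1}F)=[\mathcal{O}_{-3}:\Lambda(F,\mu)]$. The pairs $(1,0)$ and $(0,1)$ yield $\delta=1$ and $\delta=\alpha$ respectively and fall into the branch $y(\xi)=y(\eta)=0$, delivering $m$ and $m|\alpha|^{2}$ components.

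The remaining two pairs $(1,1)$ and $(1,\omega_{-3})$ both give $\delta=1$ (since $1$ is a unit), so $\xi=1$ and $\eta=\alpha$ or $\eta=\omega_{-3}\alpha$. The only non-routine algebraic step is the expansion $\omega_{-3}\alpha=-y(\alpha)+\bigl(x(\alpha)+y(\alpha)\bigr)\omega_{-3}$, obtained from the minimal relation $\omega_{-3}^{2}=\omega_{-3}-1$, which lets me read off $x(\omega_{-3}\alpha)=-y(\alpha)$ and $y(\omega_{-3}\alpha)=x(\alpha)+y(\alpha)$. Whether the first or second branch of the index formula applies depends on whether the relevant integer $y(\alpha)$ or $x(\alpha)+y(\alpha)$ vanishes; in either sub-case the output collapses to the clean expressions $GCD(y(\alpha),m)$ and $GCD(x(\alpha)+y(\alpha),m)$ under the natural convention $GCD(0,m)=m$.

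Adding the four counts gives the stated value of $h(\mu^{-1}D_{-3}^{\rm Hir})$. The main obstacle I anticipate is the bookkeeping between the three branches of the index formula as the relevant integer vanishes, together with verifying that the resulting expressions agree on the boundary between branches. A minor additional point I would address is that the preimages of distinct Hirzebruch components share no irreducible component, which follows because the pulled-back slope vectors $(a_{i},b_{i}\alpha)$ remain pairwise non-proportional whenever the original $(a_{i},b_{i})$ are, as $\alpha\neq 0$.
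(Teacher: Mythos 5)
Your proposal is correct and follows exactly the paper's intended route: the paper offers no separate proof, stating only that the corollary is ``the immediate application of Corollary \ref{cuspLift}'' to the four components of $D_{-3}^{\rm Hir}$ with $\beta=1$, which is precisely what you carry out. Your detailed bookkeeping (in particular the expansion $\omega_{-3}\alpha=-y(\alpha)+(x(\alpha)+y(\alpha))\omega_{-3}$ from $\omega_{-3}^2=\omega_{-3}-1$, the branch analysis with the convention $GCD(0,m)=m$, and the remark that non-proportional slope vectors keep the pulled-back components distinct) correctly fills in the steps the paper leaves implicit.
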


For $m=1$ and $E_{-3,1} = E_{-3}$ one has $\deg ( \mu) = | \alpha|^2$ and $h( \mu ^{-1} D_{-3} ^{\rm Hir}) = | \alpha|^2 + 3$ cusps of the non-compact  torsion free ball quotients ${\mathbb B} / \Gamma _{\mu}$, associated with $\mu  ^{-1} D_{-3} ^{\rm Hir}$.
Note that the irreducible components of $D_{-3}^{\rm Hir}$, different from $E_{-3} (0,1)$ pull back to irreducible smooth elliptic curves and only the number of the irreducible components of $\mu ^{-1} E_{-3}(0,1)$ increases with $|\alpha|^2$.
In order to obtain  a proportional elliptic con\-fi\-gu\-ra\-tion $D \subset A_{-3}$, whose pull backs $\mu ^{-1} D \subset A_{-3}$ have fixed number of irreducible components and infinitely increasing number of singular points $|\alpha|^2 \to \infty$, one maps isomorphically $D_{-3} ^{\rm Hir}$ into a proportional elliptic configuration $D_{-3}^{(1,4)} \subset A_{-3}$ without irreducible components, parallel to $E_{-3}(0,1)$.
More precisely, the linear transformation
\[
g = \left( \begin{array}{cc}
1  &  e^{\frac{\pi i}{3}}  \\
0  &  1
\end{array}  \right) \in GL(2, \mathcal{O}_{-3}) < Aut (A_{-3})
\]
maps
$
D_{-3} ^{\rm Hir} = E_{-3} (1,0) + E_{-3} (0,1) + E_{-3} (1,1) + E_{-3} \left( 1, e^{\frac{\pi i}{3}} \right)
$
onto the proportional elliptic configuration
\begin{equation}   \label{ModiifedHirzebruchsPEC}
D_{-3} ^{(1,4)} = E_{-3}(1,0) + E_{-3} \left( e^{\frac{\pi i}{3}} , 1 \right) + E_{-3} \left( \sqrt{-3} e^{ - \frac{\pi i}{3}}, 1 \right) + E_{-3} (1,1).
\end{equation}
In analogy with Corollary \ref{HirzebruchsPullBack} one has

\begin{corollary}   \label{ModifiedHirzebruchsPullBack}
Let us consider the ring  $\mathcal{O}_{-3}$   of Eisenstein integers, the elliptic curve
$E_m = {\mathbb C} / \mathcal{O}_{-3,m} $,  whose fundamental group is the order $\mathcal{O}_{-3,m} = {\mathbb Z} + m \mathcal{O}_{-3}$ of ${\mathbb Q}( \sqrt{-3})$ with conductor $m \in {\mathbb N}$, $E_{-3} := E_{-3,1}$,  the isogeny
\[
\mu = \left( \begin{array}{cc}
\alpha  &  0  \\
0  &  1
\end{array}   \right) : E_{-3,m} \times E_{-3,m} \longrightarrow E_{-3} \times E_{-3},
\]
and the epimorphisms $x: ( \mathcal{O}_{-3}, +)    \rightarrow ({\mathbb Z},+)$, $y: ( \mathcal{O}_{-3}, +)   \rightarrow ({\mathbb Z},+)$ of additive groups  with $x( \alpha) + \omega _{-3} y( \alpha) = \alpha$ for $\forall \alpha \in \mathcal{O}_{-3}$, $\omega _{-3} := e^{\frac{\pi i}{3}}$.
   Denote by
   \[
   n ( \mu ^{-1} (F))  = [ \mathcal{O}_{-3} : a \mathcal{O}_{-3,m}  + b \alpha \mathcal{O}_{-3,m}]
   \]
    the number of the irreducible components of the pull back
 $\mu ^{-1} (F)$ of an elliptic curve $F = E_{-3}(a,b) + (P,Q) \subset E_{-3} \times E_{-3}$ by $\mu$.
 Then $\deg ( \mu ) = m |\alpha|^2$,
 \[
 n \left( \mu ^{-1} E_{-3} \left( e^{\frac{\pi i}{3}}, 1 \right) \right) = GCD( x( \alpha), m),
 \]
 \[
n \left(  \mu ^{-1} E_{-3} \left( \sqrt{-3} e^{- \frac{\pi i}{3}}, 1 \right) \right) =
\begin{cases}
1  &  \text{  for $\alpha \not \in \sqrt{-3} \mathcal{O}_{-3}$, }  \\
3  &  \text{ for $\alpha \in \sqrt{-3} \mathcal{O}_{-3}$,  }
\end{cases}
\]
and  the number of the irreducible components of the proportional elliptic configuration $\mu ^{-1} D_{-3} ^{(1,4)} \subset E_{-d,m} \times E_{-d,m}$  is
\[
h \left(  \mu D_{-3} ^{(1,4)} \right) =
\begin{cases}
m + GCD(x( \alpha),m) + GCD(y( \alpha), m) + 1 &  \text{ for $\alpha \not \in \sqrt{-3} \mathcal{O}_{-3}$,  }  \\
m + GCD(x( \alpha), m) + GCD(y(\alpha), m) + 3  &  \text{  for $\alpha \in \sqrt{-3} \mathcal{O}_{-3}$}
\end{cases}
\]

In particular, if $m=1$ then
\[
n ( \mu ^{-1} D_{-3} ^{(1,4)} ) =
\begin{cases}
4  &  \text{  for $\alpha \not \in \sqrt{-3} \mathcal{O}_{-3}$,  }  \\
6  & \text{  for $\alpha \in \sqrt{-3} \mathcal{O}_{-3}$. }
\end{cases}
\]
\end{corollary}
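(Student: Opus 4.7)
\emph{Proof plan.} The statement reduces to a direct application of Corollary~\ref{cuspLift} to each of the four irreducible components of $D_{-3}^{(1,4)}$. I would first rewrite the third summand into a form covered by that corollary: using $\omega_{-3}^{2} = \omega_{-3}-1$ one checks $\sqrt{-3} = \omega_{-3}(1+\omega_{-3})$, and combined with $\omega_{-3}\bar{\omega}_{-3}=1$ this gives $\sqrt{-3}\,e^{-\pi i/3} = 1+\omega_{-3}$, whence $E_{-3}\bigl(\sqrt{-3}\,e^{-\pi i/3},\,1\bigr) = E_{-3}(1+\omega_{-3},\,1)$. All four slope pairs $(a,b)$ are now in $\mathcal{O}_{-3}^{2}$ with $GCD(a,b)=1$ (since $b=1$ or $a=1$ is a unit in three of the four components). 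The degree is then read off from Lemma~\ref{CoarseCuspLift} applied with $\beta=1$: $\deg(\mu)=[\mathcal{O}_{-3}:\alpha\mathcal{O}_{-3,m}]\cdot[\mathcal{O}_{-3}:\mathcal{O}_{-3,m}]=m^{2}|\alpha|^{2}$.

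For each component $F=E_{-3}(a,b)+(P,Q)$ I would compute $\delta=GCD(a,b\alpha)$, $\xi=a/\delta$, $\eta=b\alpha/\delta$ and invoke the index formula of Corollary~\ref{cuspLift}. For $F=E_{-3}(1,0)$ the slope has $b=0$, so I go back to Lemma~\ref{CoarseCuspLift}: $\Lambda(F)=\mathcal{O}_{-3}$ and $\Lambda(F,\mu)=\mathcal{O}_{-3,m}$, giving $n(\mu^{-1}F)=m$. For $F=E_{-3}(\omega_{-3},1)$ the unit $\omega_{-3}$ forces $\delta=1$; then $\xi=\omega_{-3}$ has $x(\xi)=0$, $y(\xi)=1$, and $\eta=\alpha$, so Corollary~\ref{cuspLift} (first or third sub-case, according as $x(\alpha)\neq 0$ or $x(\alpha)=0$) yields $n(\mu^{-1}F)=GCD(x(\alpha),m)$, with the convention $GCD(0,m)=m$ matching the degenerate sub-case. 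For $F=E_{-3}(1,1)$ the symmetric computation yields $n(\mu^{-1}F)=GCD(y(\alpha),m)$.

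The principal step is the analysis of $F=E_{-3}(1+\omega_{-3},1)$. Here $1+\omega_{-3}$ is a prime of $\mathcal{O}_{-3}$ of norm $3$, ramified above the rational prime $3$, and $(1+\omega_{-3})\mathcal{O}_{-3}=\sqrt{-3}\,\mathcal{O}_{-3}$, so the divisibility condition in the corollary's statement is exactly $\alpha\in(1+\omega_{-3})\mathcal{O}_{-3}$. If $\alpha\notin\sqrt{-3}\,\mathcal{O}_{-3}$, then $GCD(1+\omega_{-3},\alpha)=1$, so $\delta=1$, $\xi=1+\omega_{-3}$, $\eta=\alpha$; from $x(\xi)=y(\xi)=1$ one reads off $x_{o}=y_{o}=1$, hence $n(\mu^{-1}F)=1$. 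If instead $\alpha\in\sqrt{-3}\,\mathcal{O}_{-3}$, write $\alpha=(1+\omega_{-3})\gamma$; then $\delta=1+\omega_{-3}$, $|\delta|^{2}=3$, $\xi=1$, $\eta=\gamma$, and Corollary~\ref{cuspLift} collapses to the factor $n(\mu^{-1}F)=3$. Summing the four component counts yields the displayed formula for $h(\mu^{-1}D_{-3}^{(1,4)})$; the specialization $m=1$ is then immediate from $GCD(\cdot,1)=1$.

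The main technical subtlety lies in the case split at the ramified prime $1+\omega_{-3}$: one must decide which of the three sub-cases of Corollary~\ref{cuspLift} applies (depending on the vanishing of $x(\xi),x(\eta)$ and $y(\xi),y(\eta)$) and verify that the contribution coming from $\gamma = \alpha/(1+\omega_{-3})$ drops out to the clean values $1$ and $3$ announced in the statement. The remaining three components reduce to straightforward substitutions into the corollary's formula.
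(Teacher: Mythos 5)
Your route is the same as the paper's: the paper obtains this corollary ``in analogy with Corollary \ref{HirzebruchsPullBack}'', i.e.\ by componentwise application of Corollary \ref{cuspLift} (with Lemma \ref{CoarseCuspLift} for the slope $(1,0)$) to the four components of $D_{-3}^{(1,4)}$, which is exactly your plan. Your identification $\sqrt{-3}\,e^{-\pi i/3}=1+\omega_{-3}$ is correct, your counts for $E_{-3}(1,0)$, $E_{-3}(\omega_{-3},1)$, $E_{-3}(1,1)$ and for $E_{-3}(1+\omega_{-3},1)$ in the case $\alpha\notin\sqrt{-3}\,\mathcal{O}_{-3}$ are faithful applications of those results, and your degree $\deg(\mu)=m^{2}|\alpha|^{2}$ is the right value (the statement's $m|\alpha|^{2}$ is a typo, as comparison with Lemma \ref{CoarseCuspLift} and Corollary \ref{HirzebruchsPullBack} shows).

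The gap sits exactly at the step you deferred: the case $\alpha\in\sqrt{-3}\,\mathcal{O}_{-3}$ of the third component. Writing $\alpha=(1+\omega_{-3})\gamma$, you correctly get $\delta=1+\omega_{-3}$, $|\delta|^{2}=3$, $\xi=1$, $\eta=\gamma$, but then you assert that Corollary \ref{cuspLift} ``collapses to'' $n=3$. It does not: since $x(\xi)=1$ and $y(\xi)=0$, the corollary gives $x_{o}=GCD(1,x(\gamma))=1$ but $y_{o}=GCD(0,y(\gamma))=|y(\gamma)|$, hence
$n\left(\mu^{-1}E_{-3}(1+\omega_{-3},1)\right)=|\delta|^{2}\,GCD(x_{o},m)\,GCD(y_{o},m)=3\,GCD\!\left(y(\gamma),m\right)$
(with the convention $GCD(0,m)=m$ covering the degenerate sub-case $y(\gamma)=0$), and the factor $GCD(y(\gamma),m)$ does not drop out. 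Concretely, take $m=3$ and $\alpha=(1+\omega_{-3})\cdot 3\omega_{-3}=6\omega_{-3}-3$: then
$\Lambda(F,\mu)=(1+\omega_{-3})\mathcal{O}_{-3,3}+\alpha\,\mathcal{O}_{-3,3}=(1+\omega_{-3}){\mathbb Z}+9\omega_{-3}{\mathbb Z}$,
which has index $9$, not $3$, in $\mathcal{O}_{-3}$. So the ``clean value $3$'' cannot be verified because it is false for general $m$: the corollary as printed holds only under an additional hypothesis such as $GCD\left(y(\alpha/(1+\omega_{-3})),m\right)=1$ --- which is satisfied in the only two regimes where the paper later invokes it, namely $m=1$ (Corollary \ref{EisensteinIsogenySeriesWithBirationalTerms}) and $\alpha=1$ (Corollary \ref{NonBirationalEisensteinIsogenySeries}). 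A complete argument must either impose such a hypothesis, or replace $3$ by $3\,GCD\left(y(\alpha/(1+\omega_{-3})),m\right)$ both in the component count and in the formula for $h\left(\mu^{-1}D_{-3}^{(1,4)}\right)$; your proposal glosses over precisely the verification that fails, which is the one point where this corollary differs substantively from Corollary \ref{HirzebruchsPullBack} (there, every slope pair contains a unit coordinate, so the extra $GCD$ factor is always forced to be $1$).
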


\section{Isogeny series of co-abelian torsion free ball quotients with infinitely increasing volumes }

The previous considerations provide infinite isogeny series $( {\mathbb B} / \Gamma _n)'$ of torsion free toroidal compactifications with infinitely increasing volumes.

\begin{corollary}     \label{EisensteinIsogenySeriesWithBirationalTerms}
For an arbitrary sequence $\{ \gamma _n \} _{n=1} ^{\infty} \subset \mathcal{O}_{-3} \setminus ( \mathcal{O}_{-3}^* \cup \{ 0 \})$,  consider the sequence $\{ \alpha _n = \prod\limits _{j=1} ^n \gamma _j \} _{n=1} ^{\infty} \subset \mathcal{O}_{-3}$ and the isogenies
\[
\lambda _n = \left(  \begin{array}{cc}
\gamma_n  &  0   \\
0  &   -1
\end{array}  \right)  :  E_{-3} \times E_{-3}  \longrightarrow E_{-3} \times  E_{-3} \ \ \mbox{  for  } \ \ \forall n \in {\mathbb N}.
\]

(i) Then
\[
\begin{diagram}
\node{( {\mathbb B} / \Gamma _{-3} ^{\rm Hir})'}   \node{( {\mathbb B} / \Gamma _1)'}  \arrow{w,t}{\lambda '_1}
\node{( {\mathbb B} / \Gamma _2)'}  \arrow{w,t}{\lambda '_2}  \node{\mbox{  }}  \arrow{w,t}{\lambda '_3}          \node{\ldots }
\end{diagram}
\]
is an infinite isogeny sequence  of torsion free, Picard modular  toroidal compactifications over ${\mathbb Q}( \sqrt{-3})$, 
birational to $E_{-3} \times E_{-3}$,
with infinitely increasing volume
\[
{\rm vol} ( {\mathbb B} / \Gamma _n) = \frac{8 \pi ^2}{3} | \alpha _n |^2 = \frac{8 \pi ^2}{3} \prod\limits _{j=1} ^n | \gamma _j |^2
\]
and infinitely increasing number of cusps
\[
h ( {\mathbb B} / \Gamma _n) = | \alpha _n|^2 + 3 = \prod\limits _{j=1} ^n  | \gamma _j|^2 +3.
\]

(ii) If $\{ \gamma _n \} _{n=1} ^{\infty} \subset \mathcal{O}_{-3} \setminus ( \mathcal{O}_{-3} \cup \sqrt{-3} \mathcal{O}_{-3})$ then
\[
\begin{diagram}
\node{( {\mathbb B} / \Gamma _{-3} ^{(1,4)})'}   \node{( {\mathbb B} / \Gamma _1^{(1,4)})'}  \arrow{w,t}{\lambda '_1}
\node{( {\mathbb B} / \Gamma _2 ^{(1,4)})'}  \arrow{w,t}{\lambda '_2}  \node{\mbox{  }}  \arrow{w,t}{\lambda '_3}          \node{\ldots }
\end{diagram}
\]
is an infinite isogeny series of torsion free, Picard modular toroidal compactifications over ${\mathbb Q}(\sqrt{-3})$, birational to $E_{-3} \times E_{-3}$, with  four cusps  and infinitely increasing volume
\[
{\rm vol} ( {\mathbb B} / \Gamma _n ^{(1,4)}) = \frac{8 \pi ^2}{3} | \alpha _n|^2 = \frac{ 8 \pi ^2}{3} \prod\limits _{j=1} ^n | \gamma _j |^2.
\]
\end{corollary}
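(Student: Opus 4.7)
The plan is to build the series inductively via the fibered product construction of Theorem \ref{HolzapfelsResults_On_CoAbelianTFTC}(iii) applied to $\xi_{-3} : X'_{-3} = ({\mathbb B}/\Gamma_{-3}^{\rm Hir})' \to A_{-3}$. Setting $\mu_n := \lambda_n \circ \cdots \circ \lambda_1$, since compositions of diagonal matrices remain diagonal, one gets $\mu_n = \mathrm{diag}(\alpha_n, (-1)^n) : A_{-3} \to A_{-3}$, an isogeny of degree $|\alpha_n|^2 \cdot 1 = \prod_{j=1}^n |\gamma_j|^2$. Define $({\mathbb B}/\Gamma_n)' := A_{-3} \times_{A_{-3}} X'_{-3}$; by Holzapfel's theorem this is a torsion free toroidal compactification whose minimal model is again $A_{-3} = E_{-3} \times E_{-3}$, and the second projection ${\rm pr}_2 : ({\mathbb B}/\Gamma_n)' \to X'_{-3}$ is a finite unramified abelian Galois covering of degree $|\alpha_n|^2$. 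This immediately yields ${\rm vol}({\mathbb B}/\Gamma_n) = |\alpha_n|^2 \cdot \frac{8\pi^2}{3} = \frac{8\pi^2}{3}\prod_{j=1}^n |\gamma_j|^2$, which tends to infinity because $\gamma_j$ are non-units of $\mathcal{O}_{-3}$ and hence $|\gamma_j|^2 \geq 3$.

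For the number of cusps in part (i), I would count the irreducible components of the proportional elliptic configuration $\mu_n^{-1}(D_{-3}^{\rm Hir}) \subset A_{-3}$, which by Lemma \ref{ECATFTCIsNum-1Curves} equals the number of cusps of ${\mathbb B}/\Gamma_n$. Applying Corollary \ref{HirzebruchsPullBack} with $m=1$ and $\alpha = \alpha_n$, the four components of $D_{-3}^{\rm Hir}$ pull back to $1$, $|\alpha_n|^2$, $GCD(y(\alpha_n),1) = 1$, and $GCD(x(\alpha_n)+y(\alpha_n),1) = 1$ smooth elliptic curves respectively, totaling $|\alpha_n|^2 + 3$. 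For part (ii), the same fibered product argument applied to $D_{-3}^{(1,4)}$ and Corollary \ref{ModifiedHirzebruchsPullBack} with $m=1$ is used; the delicate point is to verify that $\alpha_n \notin \sqrt{-3}\,\mathcal{O}_{-3}$ for all $n$ so that we remain in the first case of that corollary and obtain exactly $4$ cusps. This is where the factoriality of $\mathcal{O}_{-3}$ is essential: since $\sqrt{-3}$ is a prime element (ramified over $3$), $\sqrt{-3}$ divides the product $\alpha_n = \prod_{j=1}^n \gamma_j$ if and only if it divides some factor $\gamma_j$, which is excluded by hypothesis.

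To establish Picard modularity and birationality, I would argue as follows. Since ${\mathbb B}/\Gamma_{-3}^{\rm Hir}$ is Picard modular over ${\mathbb Q}(\sqrt{-3})$ by Holzapfel \cite{Ho86}, the lattice $\Gamma_{-3}^{\rm Hir}$ is commensurable with $SU_{2,1}(\mathcal{O}_{-3})$. The last part of Theorem \ref{HolzapfelsResults_On_CoAbelianTFTC}(iii) gives $\Gamma_n < \Gamma_{-3}^{\rm Hir}$ of finite index $|\alpha_n|^2$, so $\Gamma_n$ is also commensurable with $SU_{2,1}(\mathcal{O}_{-3})$, hence Picard modular over ${\mathbb Q}(\sqrt{-3})$. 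Birationality is automatic: every $({\mathbb B}/\Gamma_n)'$ is obtained from $E_{-3} \times E_{-3}$ by blowing up the finite singular locus of the corresponding proportional elliptic configuration, so all terms of each series share the common minimal model $E_{-3} \times E_{-3}$ and are therefore pairwise birational. The analogous statements in part (ii) follow verbatim.

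The main obstacle I anticipate is the bookkeeping for part (ii), specifically showing that the condition $\gamma_n \notin \sqrt{-3}\,\mathcal{O}_{-3}$ propagates through the iteration so that Corollary \ref{ModifiedHirzebruchsPullBack} always returns the value $4$ rather than $6$; once primality of $\sqrt{-3}$ in the UFD $\mathcal{O}_{-3}$ is invoked, the remainder is a direct chain of applications of the previously established lemmas and corollaries. No step requires new geometric input beyond Theorem \ref{HolzapfelsResults_On_CoAbelianTFTC}(iii) and the cusp-count formulas already derived.
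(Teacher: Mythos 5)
Your proposal is correct and follows essentially the same route as the paper, whose entire proof is the one-line remark that the corollary follows from Corollaries \ref{HirzebruchsPullBack} and \ref{ModifiedHirzebruchsPullBack} applied to the composite isogeny $\mu_n = \lambda_n \cdots \lambda_2 \lambda_1$; you have merely made explicit the supporting steps (the fibered-product construction of Theorem \ref{HolzapfelsResults_On_CoAbelianTFTC}(iii), the multiplicativity of volume under unramified covers, commensurability for Picard modularity, and the primality of $\sqrt{-3}$ in the UFD $\mathcal{O}_{-3}$ for part (ii)). One minor slip: the identification of the number of cusps with the number of irreducible components of $\mu_n^{-1}(D)$ is not Lemma \ref{ECATFTCIsNum-1Curves} (which concerns the Euler number, the $(-1)$-curves and the singular points of $D$) but the remark made at the beginning of Section 4 of the paper.
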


 Corollary  \ref{EisensteinIsogenySeriesWithBirationalTerms} follows from Corollaries \ref{HirzebruchsPullBack} and \ref{ModifiedHirzebruchsPullBack} with $\mu _n = \lambda _n \ldots \lambda _2 \lambda _1$.

In order to construct isogeny series with infinitely increasing volumes and non-birational terms, we proceed with a characterization of the birational Cartesian squares of elliptic curves.

\begin{lemma}   \label{Birational1}
Let $E_1$ and $E_2$ be elliptic curves.
Then any birational map
\[
f: E_2 \times E_2 \longrightarrow E_1 \times E_1
\]
 is biregular.
\end{lemma}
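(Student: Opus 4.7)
The approach rests on the classical fact that any rational map from a smooth projective variety into an abelian variety is everywhere defined, i.e., a morphism. This is a cornerstone of the theory of abelian varieties; it holds because an abelian variety contains no rational curves, while the exceptional locus of a rational map from a smooth surface to a projective variety is covered by rational curves.

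First I would apply this extension principle to $f: E_2 \times E_2 \dashrightarrow E_1 \times E_1$. Since $E_2 \times E_2$ is a smooth projective surface and $E_1 \times E_1$ is an abelian surface, the indeterminacy locus of $f$ is empty and $f$ is a morphism defined on all of $E_2 \times E_2$. By the symmetry of the hypotheses, the inverse rational map $f^{-1}: E_1 \times E_1 \dashrightarrow E_2 \times E_2$ is likewise a morphism.

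Next I would verify that these two morphisms are mutual inverses. Since $f$ and $f^{-1}$ are birational, the composition $f^{-1} \circ f$ coincides with ${\rm id}_{E_2 \times E_2}$ on a dense open subset. Two morphisms between separated varieties agreeing on a dense open set agree everywhere, so $f^{-1} \circ f = {\rm id}_{E_2 \times E_2}$, and by the same reasoning $f \circ f^{-1} = {\rm id}_{E_1 \times E_1}$. Therefore $f$ is biregular.

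The main obstacle is having the extension theorem available. If one wishes a self-contained argument for surfaces, the idea is to resolve the indeterminacy of $f$ by a finite sequence of point blow-ups $\sigma : S \rightarrow E_2 \times E_2$, producing a morphism $\widetilde{f}: S \rightarrow E_1 \times E_1$. Each exceptional curve of $\sigma$ is a smooth rational curve, and any morphism from ${\mathbb P}^1$ to an abelian variety is constant (for instance because $H^0({\mathbb P}^1, f^* \Omega^1_{E_1 \times E_1}) = 0$, or because an abelian variety contains no rational curves). Hence $\widetilde{f}$ contracts every exceptional component to a point and, by the rigidity lemma, factors through $\sigma$ as a morphism $E_2 \times E_2 \rightarrow E_1 \times E_1$ extending $f$.
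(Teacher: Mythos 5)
Your proof is correct, but it takes a genuinely different route from the paper's. You invoke the classical extension theorem --- a rational map from a smooth variety to an abelian variety is everywhere defined, because abelian varieties contain no rational curves --- apply it to both $f$ and $f^{-1}$, and then conclude via separatedness that the two resulting morphisms are mutual inverses. The paper never appeals to this theorem or to resolution of indeterminacy: it works directly with the locus $\mathcal{D}_f$ where $f$ is defined and biregular, exploits the two foliations of $E_2 \times E_2$ by the elliptic curves $P \times E_2$ and $E_2 \times Q$, and uses the fact that a birational map between smooth projective curves is biregular to argue that the complement of $\mathcal{D}_f$ in $E_2 \times E_2$ contains no full fiber of either foliation, hence is finite, hence empty; surjectivity is then handled separately, by noting that after translating the origin $f$ becomes a homomorphism of abelian varieties whose image is Zariski closed and dense. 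As for what each approach buys: yours is shorter, fully standard, makes no use of the product structure of $E_2 \times E_2$ (it applies verbatim to any birational map between abelian varieties of any dimension), and it obtains biregularity and surjectivity in one stroke from $f \circ f^{-1} = \mathrm{id}$ and $f^{-1} \circ f = \mathrm{id}$; moreover, your blow-up argument is watertight at exactly the point where the paper's proof is delicate --- the paper infers $P \times E_2 \subseteq \mathcal{D}_f$ from the fact that the restriction of $f$ to the curve $P \times E_2$ extends to a biregular map of curves, but extendability of a restriction to a curve passing through a point does not in general imply that the surface map is defined, let alone biregular, at that point, whereas your observation that every exceptional curve of a resolution must be contracted does yield definedness. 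What the paper's route buys in exchange is a more elementary, self-contained flavor: it uses only curve theory and the group structure, with no machinery about rational curves on abelian varieties or factorization of morphisms through blow-downs.
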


\begin{proof}

Let us denote $A_j = E_j \times E_j$ for $1 \leq j \leq 2$ and put  by $\mathcal{D}_f$ the non-empty Zariski open subset of $A_2$, on which $f$ is defined and biregular.
 In other words, $\mathcal{D}_f$ is the intersection of the regularity domain of $f$ with the image of the regularity domain of $f^{-1}$ under $f^{-1}$.
 Assume that $\mathcal{D}_f \neq A_2$.
 The abelian surface $A_2 = \cup _{P \in E_2} P \times E_2$ foliates by elliptic curves $P \times E_2$, isomorphic to $E_2$.
  The Zariski closed subset $A_2 \setminus \mathcal{D}_f$ contains at most finitely many $P_1 \times E_2, \ldots , P_k \times E_2 $.
  For any $P \in E_2 \setminus \{ P_1, \ldots , P_k \}$ the  biregular restriction
   $f: (P \times E_2 ) \cap \mathcal{D}_f \rightarrow f ( ( P \times E_2 ) \cap \mathcal{D} _f )$ can be viewed as a birational map
   $f: ( P \times E_2) \myarrow \overline{f ( ( P \times E_2) \cap \mathcal{D}_f )}$ in the Zariski closures
   $\overline{f(( P \times E_2) \cap \mathcal{D}_f )}$ of $f(( P \times E_2 ) \cap \mathcal{D}_f)$ in $A_1$.
   The elliptic curves $P \times E_2$ are smooth, so that
\[
f: P \times E_2 \longrightarrow \overline{ f (( P \times E_2) \cap \mathcal{D}_f )}
\]
are biregular for $\forall P \in E_2 \setminus \{ P_1, \ldots , P_k \}$.
Thus, $\cup _{P \in E_2 \setminus \{ P_1, \ldots , P_k \}} (P \times E_2) \subseteq \mathcal{D}_f$. In order to justify the biregularity  of $f$ on $A_2$,
 let us consider the other natural  foliation $A_2 = \cup _{Q \in E_2} E_2 \times Q$ of $A_2$.
The restrictions $f: E_2 \times Q \rightarrow f( E_2 \times Q)$ are biregular for all but at most finitely many $Q \in E_2 \setminus \{ Q_1, \ldots , Q_l \}$.
 As a result, $\cup _{Q \in E_2 \setminus \{ Q_1, \ldots , Q_l \}} (E_2 \times Q) \subseteq \mathcal{D}_f$ and
\[
 A_2 \setminus \mathcal{D}_f \subseteq \left( \cup _{i=1} ^k ( P_i \times E_2) \right) \cap \left( \cup _{j=1} ^l ( E_2 \times Q_j ) \right) = \{ (P_i, Q_j) \ \ \vert \ \  1 \leq i \leq k, \ \  1 \leq j \leq l \}
\]
is at most a finite set of points.
Now,
\[
\left[ \cup _{i=1} ^k P_i \times  (E_2 \setminus \{ Q_1, \ldots , Q_l \} ) \right] \cup \left[ \cup _{j=1} ^l (E_2 \setminus \{ P_1, \ldots , P_k \} ) \times Q_j \right] \subset \mathcal{D}_f,
\]
contrary to the choice of
$\left[ \cup _{i=1} ^k P_i \times E_2 \right] \cup \left[ \cup _{j=1} ^l E_2 \times Q_j \right] \subseteq (A_2 \setminus   \mathcal{D}_f )$.
 The contradiction justifies that $A_2 = \mathcal{D}_f$ and $f: A_2 \rightarrow f(A_2)$ is a biregular map.
  The Cartesian square  $A_2 = E_2 \times E_2$ of the elliptic curve $E_2$  is a projective variety, so that the image
   $f(A_2)$ of $f : A_2 \rightarrow A_1$ is  Zariski closed in $A_1$.
  On the other hand, the morphism $f: A_2 \rightarrow A_1$ of abelian varieties is a group homomorphism, after an appropriate choice of an origin
  $\check{o} _{A_1}$ of $A_1$.
   The only Zariski dense abelian subvariety of $(A_1, +)$ is $A_1$ itself, so that $f(A_2) = A_1$ and $f: A_2 \rightarrow A_1$ is biregular.

\end{proof}

\begin{lemma}   \label{R1NotR2NotBiregular}
Let $E_j$, $1 \leq j \leq 2$ be elliptic curves with different endomorphism rinds $End(E_1) = R_1 \neq R_2=End(E_2)$.
Then the abelian surfaces $A_1 = E_1 \times E_1$ and $A_2 = E_2 \times E_2$ are not birational.

In particular, if   $\mathcal{O}_{-d,m}$, $\mathcal{O}_{-d,n}$ are   orders of an imaginary quadratic number field  ${\mathbb Q}( \sqrt{-d})$
  with different conductors $m,n \in {\mathbb N}$  and $E_{-d,m} := {\mathbb C} / \mathcal{O}_{-d,m}$, $E_{-d,n} = {\mathbb C} / \mathcal{O}_{-d,n}$,
  then the abelian surfaces $E_{-d,m} \times E_{-d,m}$ and $E_{-d,n} \times E_{-d,n}$ are not birational.
\end{lemma}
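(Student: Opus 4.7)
The strategy uses Lemma \ref{Birational1} to reduce the statement to a question about isomorphisms of abelian surfaces, and from there to a ring-theoretic assertion about $2 \times 2$ matrix algebras.

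Suppose, towards a contradiction, that there is a birational map $f : A_2 = E_2 \times E_2 \longrightarrow A_1 = E_1 \times E_1$. By Lemma \ref{Birational1}, $f$ is actually a biregular isomorphism of projective surfaces. Composing with the translation $\tau _{- f ( \check{o} _{A_2})} : A_1 \rightarrow A_1$ produces a biregular morphism $\tilde f : A_2 \rightarrow A_1$ with $\tilde f ( \check{o}_{A_2}) = \check{o}_{A_1}$. As noted at the end of the proof of Lemma \ref{Birational1}, a morphism of abelian varieties preserving the origin is automatically a group homomorphism, so $\tilde f$ is an isomorphism of abelian varieties.

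Conjugation by $\tilde f$ therefore induces a ring isomorphism $End(A_2) \simeq End(A_1)$. Writing an endomorphism of a Cartesian square $E \times E$ of an elliptic curve $E$ as a $2 \times 2$ matrix of elements of $Hom(E,E) = End(E)$ yields
\[
End(A_j) \simeq M_{2 \times 2}(R_j) \ \ \mbox{  for  } \ \ j = 1, 2,
\]
whence $M_{2 \times 2}(R_1) \simeq M_{2 \times 2}(R_2)$ as rings.

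Since every order $R_j$ in an imaginary quadratic field is commutative, the centre of $M_{2 \times 2}(R_j)$ is the subring $\{ a I_2 \ | \ a \in R_j \}$ of scalar matrices, canonically isomorphic to $R_j$. Any ring isomorphism $M_{2 \times 2}(R_1) \simeq M_{2 \times 2}(R_2)$ must carry centre to centre, producing $R_1 \simeq R_2$, contrary to the hypothesis $R_1 \neq R_2$. For the particular case, the order $\mathcal{O}_{-d,m} \subset {\mathbb Q}(\sqrt{-d})$ has discriminant $m^2 \mbox{disc}(\mathcal{O}_{-d})$, a genuine ring-theoretic invariant; distinct conductors $m \neq n$ thus produce non-isomorphic orders $\mathcal{O}_{-d,m}$ and $\mathcal{O}_{-d,n}$, and the general statement applies with $R_1 = \mathcal{O}_{-d,m}$ and $R_2 = \mathcal{O}_{-d,n}$. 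The only delicate point — rather than a genuine obstacle — is the implicit interpretation of $R_1 \neq R_2$ as non-isomorphism of abstract rings, which is precisely the invariant that the argument extracts from $End(A_j)$.
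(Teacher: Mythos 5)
Your proposal is correct in its core mechanism but takes a genuinely different route from the paper's. After the same reduction via Lemma \ref{Birational1} and translation to a group isomorphism $f : (A_2,+) \rightarrow (A_1,+)$, the paper never passes to endomorphism algebras: it tracks the coordinate curve $E_2 \times \check{o}_{E_2}$, whose image is a curve $E_1(a,b)$ through the origin, and uses the slope-vector description of Proposition \ref{EllipticFibration} to identify $\pi _1 (E_2)$ with $a^{-1} \pi _1 (E_1) \cap b^{-1} \pi _1 (E_1)$ as lattices in ${\mathbb C}$. Any $r_1 \in R_1$ stabilizes $a^{-1} \pi _1 (E_1)$ and $b^{-1} \pi _1 (E_1)$, hence stabilizes $\pi _1 (E_2)$, giving $R_1 \subseteq R_2$; the symmetric argument for $f^{-1}$ gives $R_2 \subseteq R_1$, so $R_1 = R_2$ as subrings of ${\mathbb C}$ --- a literal contradiction of the hypothesis. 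Your route --- conjugation gives $End(A_2) \simeq End(A_1)$, then $End(E_j \times E_j) \simeq M_{2 \times 2}(R_j)$, then the centre of $M_{2 \times 2}(R_j)$ recovers $R_j$ --- is sound in all three steps and avoids coordinates and slope vectors entirely.

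The one real difference in strength is the point you yourself flag. The hypothesis $R_1 \neq R_2$, as the paper uses it (and as it is instantiated in the second half, where $\mathcal{O}_{-d,m}$ and $\mathcal{O}_{-d,n}$ are distinct lattices in ${\mathbb C}$), means inequality of subrings of ${\mathbb C}$; your argument only extracts an abstract ring isomorphism $R_1 \simeq R_2$, so as written it proves the lemma under the stronger hypothesis that $R_1$, $R_2$ are non-isomorphic rings. This is not a fatal gap, because for elliptic curves over ${\mathbb C}$ the two notions coincide: $End(E)$ is either ${\mathbb Z}$ or an order in an imaginary quadratic field, an abstract isomorphism of two such orders extends to an isomorphism of their fraction fields, forcing the fields to be equal, and since the only automorphisms of ${\mathbb Q}(\sqrt{-d})$ are the identity and complex conjugation, both of which preserve every order, the two orders must coincide as subrings of ${\mathbb C}$. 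But this supplementary argument (or your discriminant computation, which settles exactly the imaginary-quadratic case needed) should be stated as part of the proof of the general assertion, not relegated to a remark about interpretation. Finally, in the particular case you should cite Lemma \ref{EndEqualPi1}: the identification $End(E_{-d,m}) = \mathcal{O}_{-d,m}$ is precisely its content, and without it you do not know the endomorphism rings to which your general statement is being applied.
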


\begin{proof}

Assume the opposite and consider a birational map $f: A_2 \rightarrow A_1$.
By Lemma \ref{Birational1}, $f$ is biregular.
After moving the origin $\check{o}_{A_1}$ of $A_1$ at $f( \check{o}_{A_2})$, the isomorphism $f: (A_2, +) \rightarrow (A_1,+)$ is a group homomorphism and
\[
f = \left( \begin{array}{cc}
a  &  b  \\
c  &  d
\end{array}  \right) \in GL(2, {\mathbb C})
\]
is represented by a non-singular matrix.
The elliptic curve $E_2 \times \check{o}_{E_2}$ is a subgroup of $(A_2, +)$.
The homomorphism $f: (A_2, +) \rightarrow (A_1, +)$ maps it isomorphically onto an elliptic curve $f( E_2 \times \check{o}_{E_2}) = E_1 (a,b) \subset A_1$ through the origin $\check{o}_{A_1}$ with slope vector $(a,b) \in {\mathbb C}^2 \setminus \{ (0,0) \}$.
The induced map $f_* : \pi _1 ( E_2 \times \check{o}_{E_2}) \rightarrow \pi _1 (E_1(a,b))$ of the fundamental groups is a group isomorphism that allows to identify
\[
\pi _1 (E_2) = \pi _1 ( E_2 \times \check{o}_{E_2}) = \pi _1 (E_1(a,b)) = a^{-1} \pi _1 (E_1) \cap b^{-1} \pi _1 (E_1).
\]
For an arbitrary $r_1 \in R_1 = End(E_1)$, one has
\[
r_1 \pi _1 (E_2) = r_1 ( a^{-1} \pi _1 (E_1) \cap b^{-1} \pi _1 (E_1)) = a^{-1} ( r_1 \pi _1 (E_1)) \cap  b^{-1} ( r_1 \pi _1 (E_1)) =
\]
\[
= a^{-1} \pi _1 (E_1) \cap b^{-1} \pi _1 (E_1) = \pi _1 (E_2),
\]
so that $r_1 \in R_2 = End (E_2)$ and $R_1 \subseteq R_2$.
Similar considerations for the isomorphism $f^{-1} : (A_1, +) \rightarrow (A_2, +)$ of abelian surfaces yields $R_2 \subseteq R_1$, whereas $R_1 = R_2$.
The contradiction justifies that $A_1 = E_1 \times E_1$ and $A_2 = E_2 \times E_2$ are not birational for $End(E_1) = R_1 \neq R_2 = End(E_2)$.

Let ${\mathbb Q}( \sqrt{-d}$ be an imaginary quadratic number field and  $E_{-d,k} = {\mathbb C} / \mathcal{O}_{-d,k}$ be  the elliptic curve, whose fundamental group is the order $\mathcal{O}_{-d,k}$ of ${\mathbb Q}( \sqrt{-d})$ with conductor $k \in {\mathbb N}$.
 Lemma \ref{EndEqualPi1} has established the coincidence $R_{-d,k} = End (E_{-d,k}) = \pi _1 (E_{-d,k}) = \mathcal{O}_{-d,k}$.
 of the endomorphism ring and the fundamental group of $E_{-d,k}$.
 Therefore $R_{-d,m} = \mathcal{O}_{-d,m} \neq \mathcal{O}_{-d,n}  = R_{-d,n}$ for  different $m,n \in {\mathbb N}$ and the abelian surface
  $E_{-d,m} \times E_{-d,m}$ is not birational to the abelian surface $E_{-d,n} \times E_{-d,n}$.

\end{proof}

Let $\mathcal{O}_{-3,m}$ be the  order of the  imaginary quadratic number field ${\mathbb Q}( \sqrt{-3})$ with conductor $m \in {\mathbb N}$ and
$E_{-3,m} := {\mathbb C} / \mathcal{O}_{-3,m}$ be the elliptic curve with fundamental group $\mathcal{O}_{-3,m}$.
An arbitrary sequence $\{ k_n \} _{n=1} ^{\infty} \subset {\mathbb N} \setminus \{ 1 \}$ gives rise to a strictly increasing sequence
 $\{ m_n := \prod\limits _{j=1} ^n k_j \} \subset {\mathbb N}$ and a sequence
 \[
 I_2 : E_{-d, m_n} \times E_{-d, m_n} \longrightarrow E_{-d, m_{n-1}} \times E_{-d, m_{n-1}}
 \]
 of isogenies of non-birational abelian surfaces, as far as
 \[
\mathcal{O}_{-3, m_n}  = {\mathbb Z} + m_n \mathcal{O}_{-3} = {\mathbb Z} + m_{n-1} k_n \mathcal{O}_{-3} \varsubsetneq
  {\mathbb Z} + m_{n-1} \mathcal{O}_{-3} = \mathcal{O}_{-3, m_{n-1}}.
 \]
Applying Corollaries \ref{HirzebruchsPullBack}  and \ref{ModifiedHirzebruchsPullBack} with $\alpha =1$ and $m = m_n$, one obtains the following

\begin{corollary}   \label{NonBirationalEisensteinIsogenySeries}
For an arbitrary sequence $\{ k_n \} _{n=1} ^{\infty} \subset {\mathbb N} \setminus \{ 1 \}$ consider the strictly increasing sequence
 $\{ m_n := \prod\limits _{j=1} ^n k_j \} _{n=1} ^{\infty} \subset {\mathbb N}$ and the isogenies
\[
\lambda _n := I_2 : E_{-d, m_n} \times E_{-d, m_n} \longrightarrow E_{-d, m_{n-1}} \times E_{-d, m_{n-1}}.
\]

(i) Then
\[
\begin{diagram}
\node{( {\mathbb B} / \Gamma _{-3} ^{\rm Hir})'}  \node{( {\mathbb B} / \Gamma _1)'}  \arrow{w,t}{\lambda '_1}
\node{( {\mathbb B} / \Gamma _2)'}  \arrow{w,t}{\lambda '_2}   \node{\ldots}  \arrow{w,t}{\lambda '_3}
\end{diagram}
\]
is an infinite isogeny sequence of co-abelian,  torsion free, Picard modular toroidal compactifications over ${\mathbb Q}( \sqrt{-3})$, with mutually non-birational terms, infinitely increasing volume
\[
{\rm vol} ( {\mathbb B} / \Gamma _n) = \frac{8 \pi ^2}{3} m_n ^2 = \frac{8 \pi^2}{3} \prod\limits _{j=1} ^n k_j^2
\]
and infinitely increasing number of cusps
\[
h( {\mathbb B} / \Gamma _n) = 3 m_n + 1 = \prod\limits _{j=1} ^n k_j + 1.
\]

(ii) The infinite isogeny series
\[
\begin{diagram}
\node{( {\mathbb B} / \Gamma _{-3} ^{(1,4)})'}  \node{( {\mathbb B} / \Gamma _1^{(1,4)})'}  \arrow{w,t}{\lambda '_1}
\node{( {\mathbb B} / \Gamma _2^{(1,4)})'}  \arrow{w,t}{\lambda '_2}   \node{\ldots}  \arrow{w,t}{\lambda '_3}
\end{diagram}
\]
consists of  mutually non-birational, co-abelian, torsion free,  Picard modular  terms over ${\mathbb Q}( \sqrt{-3})$, with infinitely increasing volume
\[
{\rm vol} ( {\mathbb B} / \Gamma _n ^{(1,4)} ) = \frac{8 \pi ^2}{3} m_n^2 = \frac{8 \pi^2}{3} \prod\limits _{j=1} ^n k_j ^2
\]
and infinitely increasing number of cusps
\[
h ( {\mathbb B} / \Gamma _n ^{(1,4)} ) = 2 m_n +2 = \prod\limits _{j=1} ^n k_j +2.
\]
\end{corollary}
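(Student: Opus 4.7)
The plan is to build both series inductively by iterated isogeny pullback from a single base case, read off the numerical invariants from Corollaries~\ref{HirzebruchsPullBack} and~\ref{ModifiedHirzebruchsPullBack}, and reserve the mutual non-birationality of the terms for a final step where Lemma~\ref{R1NotR2NotBiregular} does the essential work.

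First I would verify that each $\lambda_n = I_2$ is a genuine isogeny $E_{-3,m_n}\times E_{-3,m_n} \to E_{-3,m_{n-1}}\times E_{-3,m_{n-1}}$: the inclusion $\mathcal{O}_{-3,m_n} = {\mathbb Z} + m_{n-1}k_n\mathcal{O}_{-3} \subsetneq {\mathbb Z} + m_{n-1}\mathcal{O}_{-3} = \mathcal{O}_{-3,m_{n-1}}$ has index $k_n$, so $\deg(\lambda_n) = k_n^2$. Starting from $X'_{-3} = ({\mathbb B}/\Gamma^{\rm Hir}_{-3})'$ (respectively from $({\mathbb B}/\Gamma^{(1,4)}_{-3})'$), iteratively applying Theorem~\ref{HolzapfelsResults_On_CoAbelianTFTC}(iii) with $\mu = \lambda_n$ defines $({\mathbb B}/\Gamma_n)'$ as the fibered product; the second projection $\lambda'_n$ is an unramified abelian Galois covering of degree $k_n^2$, and the source is a co-abelian torsion free toroidal compactification whose abelian minimal model is $E_{-3,m_n}\times E_{-3,m_n}$. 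The Picard modular property over ${\mathbb Q}(\sqrt{-3})$ propagates because $\Gamma_n$ is of finite index in the Picard modular $\Gamma^{\rm Hir}_{-3}$ (respectively $\Gamma^{(1,4)}_{-3}$), and commensurability with $SU_{2,1}(\mathcal{O}_{-3})$ is inherited by finite-index subgroups.

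Next I would pin down the numerical data. Since each $\lambda'_n$ is unramified, the Haar volume multiplies by $\deg(\lambda_n) = k_n^2$, so telescoping gives
\[
{\rm vol}({\mathbb B}/\Gamma_n) \;=\; \deg(\mu_n)\cdot {\rm vol}({\mathbb B}/\Gamma^{\rm Hir}_{-3}) \;=\; m_n^2\cdot\frac{8\pi^2}{3},
\]
where $\mu_n = \lambda_n\circ\cdots\circ\lambda_1$ is the identity matrix viewed as an isogeny $E_{-3,m_n}\times E_{-3,m_n}\to E_{-3}\times E_{-3}$. For the cusp count in (i) I feed $\alpha = 1$, $m = m_n$ into Corollary~\ref{HirzebruchsPullBack}; since $x(1)=1$, $y(1)=0$, one has $GCD(y(1),m_n)=m_n$ and $GCD(x(1)+y(1),m_n)=1$, and summing the four contributions gives $h = m_n + m_n + m_n + 1 = 3m_n+1$. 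For (ii) I apply Corollary~\ref{ModifiedHirzebruchsPullBack} the same way, noting $1\notin\sqrt{-3}\mathcal{O}_{-3}$, to obtain $h = m_n + 1 + m_n + 1 = 2m_n+2$.

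The mutual non-birationality is the only step requiring real content beyond bookkeeping, and is where I expect the main obstacle to lie. If $({\mathbb B}/\Gamma_n)'$ and $({\mathbb B}/\Gamma_{n'})'$ were birational for some $n\neq n'$, then since both are smooth projective of Kodaira dimension $0$, their (unique) minimal models $E_{-3,m_n}\times E_{-3,m_n}$ and $E_{-3,m_{n'}}\times E_{-3,m_{n'}}$ would be biregularly isomorphic, hence in particular birational. But Lemma~\ref{EndEqualPi1} shows these abelian surfaces have distinct endomorphism rings $\mathcal{O}_{-3,m_n}\neq\mathcal{O}_{-3,m_{n'}}$, so Lemma~\ref{R1NotR2NotBiregular} rules out any birational map between them, delivering the required contradiction. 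The slightly delicate point here is the transition from birationality of the toroidal compactifications to that of their minimal models, which I would justify by invoking the uniqueness of the minimal model of a smooth projective surface of non-negative Kodaira dimension, applied to the abelian minimal models identified by the blow-down $\xi$ of Theorem~\ref{HolzapfelsResults_On_CoAbelianTFTC}.
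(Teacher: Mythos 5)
Your proposal is correct and follows essentially the same route as the paper: iterated application of Theorem~\ref{HolzapfelsResults_On_CoAbelianTFTC}(iii) to the isogenies $\lambda_n = I_2$, cusp counts from Corollaries~\ref{HirzebruchsPullBack} and~\ref{ModifiedHirzebruchsPullBack} with $\alpha = 1$, $m = m_n$, and non-birationality of the terms via Lemmas~\ref{EndEqualPi1} and~\ref{R1NotR2NotBiregular} applied to the minimal models $E_{-3,m_n}\times E_{-3,m_n}$ with distinct conductors. Your explicit computations $h = 3m_n+1$ and $h = 2m_n+2$ agree with the paper's leading expressions (the paper's rewritings $\prod k_j + 1$ and $\prod k_j + 2$ are evidently typos for $3\prod k_j + 1$ and $2\prod k_j + 2$).
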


The constructions from Corollaries \ref{EisensteinIsogenySeriesWithBirationalTerms} and \ref{NonBirationalEisensteinIsogenySeries}  can be carried over with an arbitrary  proportional elliptic configuration $D \subset {\mathbb C}^2 / \mathcal{O}_{-d}^2$, defined over an imaginary quadratic number field
 ${\mathbb Q}( \sqrt{-d})$ of class number $CL( {\mathbb Q}( \sqrt{-d}) ) =1$.
 For instance, they are applicable to Holzapfel's example
\begin{equation}   \label{HolzapfelsPEC}
\begin{split}
D_{-1} ^{\rm Holz} = E_{-1} (1,0) + E_{-1}(0,1) + [ E_{-1} (-1,1) + Q_{03}] +  \\
 E_{-1} (-1-i,1) + E_{-1} (-1, 1-i) + [ E_{-1} (-i,1) + Q_{03}] \subset A_{-1}
 \end{split}
\end{equation}
from \cite{Ho01} with $A_{-1} = E_{-1} \times E_{-1}$, $E_{-1} = {\mathbb C} / ( {\mathbb Z} + {\mathbb Z}i)$, $Q_{03} = (Q_0,Q_3)$, $Q_0 = \check{o}_{E_{-1}}$,
$Q_3 = \frac{1+i}{2} + ( {\mathbb Z} + {\mathbb Z}i)$ and $( D_{-1} ^{\rm Holz}) ^{\rm sing} = \{ Q_{00}, Q_{03}, Q_{30} \}$.


\end{document}